\title{Concyclic Centroids and the Anti-Fuhrmann Triangle}
\newcommand{\HH}{\mathcal H}
\newcommand{\tri}{\triangle}
\newcommand{\ma}{\measuredangle}
\newtheorem{theorem}{Theorem}[section]
\newtheorem{corollary}{Corollary}[section]
\newtheorem{lemma}{Lemma}[section]
\newtheorem{prop}{Proposition}[section]
\newtheorem{definition}{Definition}[section]
\newtheorem{rem}{Remark}[section]
\begin{document}
	
	\begin{abstract}
		In this article we present a remarkable concyclicity of four centroids related to the orthocenters of the triangles $ABC,$ $BPC,$ $CPA,$ and $PAB$ of a quadrangle $ABCP$. Furthermore, we establish a result about orthologic triangles inscribed in a rectangular hyperbola. We present an application of the concyclic centroids theorem when $P$ corresponds to the incenter $I$ of $ABC$. This specialization leads to the discovery of a novel triangle intimately related to the Fuhrmann triangle.
	\end{abstract}
	\title{}
	
	\markboth{Sudharshan K V, Shantanu Nene}{Concyclic Centroids and the Anti-Fuhrmann Triangle}
	
	\centerline {\Large{\bf CONCYCLIC CENTROIDS AND THE ANTI-FUHRMANN TRIANGLE}}
	
	\bigskip
	
	\begin{center}
		{\large SUDHARSHAN K V, SHANTANU NENE}
		
		\centerline{}
	\end{center}
	
	\bigskip
	
	\textbf{Abstract.} In this article we present a remarkable concyclicity of four centroids related to the orthocenters of the triangles $ABC,$ $BPC,$ $CPA,$ and $PAB$ of a quadrangle $ABCP$. Furthermore, we establish a result about orthologic triangles inscribed in a rectangular hyperbola. We present an application of the concyclic centroids theorem when $P$ corresponds to the incenter $I$ of $ABC$. This specialization leads to the discovery of a novel triangle intimately connected to the Fuhrmann triangle.
	
	\bigskip
	
	\section{Introduction}
	Let $ABC$ be a triangle, and let $P$ be a point, and let $H,H_A,H_B,$ and $H_C$ be the orthocenters of triangles $ABC, BPC,CPA,$ and $PAB$ respectively. Let $\HH_P$ be a circumrectangular hyperbola of $ABC$ passing through $P$. Let $Z$ denote the Poncelet point of $ABCP$, which is also the center of $\HH_P$. The following two lemmas are well-known.
	
	\begin{lemma}
		\label{lemma:basic}
		A circumconic $\HH$ of $ABC$ is a rectangular hyperbola if and only if $\HH$ contains $H$.
	\end{lemma}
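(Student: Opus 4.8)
The plan is to work in barycentric coordinates with respect to $ABC$, in which every circumconic $\HH$ takes the form $uyz + vzx + wxy = 0$ for some perspector $(u:v:w)$, since $A=(1:0:0)$, $B=(0:1:0)$, $C=(0:0:1)$ automatically satisfy such an equation. I would then reduce \emph{both} sides of the claimed equivalence to one and the same linear condition on $(u:v:w)$, namely that $(u:v:w)$ lies on the orthic axis.

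First I would test membership of the orthocenter. Writing $S_A = \tfrac12(b^2+c^2-a^2)$ and $S_B,S_C$ cyclically, one has $H = (S_BS_C : S_CS_A : S_AS_B)$. Substituting into $uyz+vzx+wxy$ produces the factor $S_AS_BS_C\,(uS_A + vS_B + wS_C)$, so (away from the degenerate right-angled case, where some $S_\bullet$ vanishes) $H \in \HH$ holds precisely when $uS_A + vS_B + wS_C = 0$.

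Next I would capture rectangularity through the two points of $\HH$ at infinity. Substituting $z = -(x+y)$ into the conic yields the binary quadratic $vx^2 + (u+v-w)xy + uy^2 = 0$, whose two roots give the asymptotic directions $(x_i:y_i:z_i)$ with $z_i = -(x_i+y_i)$. Normalizing via Vieta's relations so that $y_1y_2 = v$ and $x_1x_2 = u$, a short computation gives $x_1y_2+x_2y_1 = w-u-v$ and hence $z_1z_2 = u+(w-u-v)+v = w$. The hyperbola is rectangular exactly when these two infinite points are perpendicular directions, and here I would invoke the standard barycentric perpendicularity criterion: displacement directions $(x_1:y_1:z_1)$ and $(x_2:y_2:z_2)$ on the line at infinity are orthogonal iff $S_Ax_1x_2 + S_By_1y_2 + S_Cz_1z_2 = 0$. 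This collapses to $uS_A + vS_B + wS_C = 0$, the identical linear condition, which proves the equivalence.

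The routine part (the substitution and the Vieta bookkeeping) is harmless; the only genuine content is the perpendicularity criterion, so the step to get right is its justification. It follows from the squared-length form $-a^2yz - b^2zx - c^2xy$ for a displacement $(x,y,z)$ with $x+y+z=0$: polarizing this bilinear form and restricting to the hyperplane $x+y+z=0$ reduces it, up to a nonzero scalar, to $S_Ax_1x_2 + S_By_1y_2 + S_Cz_1z_2$. I would also dispatch the degenerate cases separately: when $ABC$ is right-angled, $H$ coincides with the right-angle vertex and thus lies on every circumconic, while the condition $uS_A+vS_B+wS_C=0$ still characterizes rectangularity, so the statement persists. A fully synthetic alternative is available via Desargues' involution theorem applied to the pencil of conics through $A,B,C,H$, matching the pencil's involution on the line at infinity with the orthogonal involution, but the coordinate argument above is shorter and entirely self-contained.
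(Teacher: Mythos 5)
Your argument is correct and complete for a non-right triangle, but it takes a genuinely different route from the paper. The paper's proof is synthetic and projective: it uses Steiner's cross-ratio characterization of a conic through five points, together with the observation that the pencil $(BA,BH;B\infty_1,B\infty_2)$ is carried to the pencil $(CH,CA;C\infty_2,C\infty_1)$ by the rotation through a right angle, so that membership of $H$ and membership of the perpendicular direction $\infty_2$ become the same cross-ratio identity read in two ways. Your proof instead reduces both conditions to the single linear equation $uS_A+vS_B+wS_C=0$ on the perspector, i.e.\ to incidence with the orthic axis; the Vieta bookkeeping ($x_1x_2=u$, $y_1y_2=v$, $z_1z_2=w$ after normalization) and the polarization of the displacement form $-a^2yz-b^2zx-c^2xy$ into $S_Ax_1x_2+S_By_1y_2+S_Cz_1z_2$ are all correct. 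What the coordinate route buys is a self-contained verification plus the extra information that the rectangular circumhyperbolas are exactly the circumconics whose perspectors lie on the orthic axis; what the paper's route buys is brevity and the fact that the same two-line cross-ratio manipulation proves both implications at once. One further point worth making explicit in your version: when the condition holds, the two infinite points are necessarily real and distinct (complex-conjugate directions cannot be orthogonal for the polarized length form, which is positive definite on real displacements), so the conic really is a hyperbola and not an ellipse with ``orthogonal'' imaginary asymptotes.

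Your closing remark about the right-angled case is, however, wrong, and you should delete or reverse it. If the right angle is at $A$, then $H=A$ lies on \emph{every} circumconic, while rectangularity still imposes the nontrivial constraint $vS_B+wS_C=0$; the circumcircle itself is a circumconic containing $H$ that is not a rectangular hyperbola. So the lemma genuinely fails for right triangles rather than ``persisting''; the correct disposition is to note that the statement (and the paper's own proof, whose pencils at $B$ and $C$ degenerate when $H=A$) tacitly assumes $ABC$ is not right-angled. This does not affect the main argument, which is sound.
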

	\begin{proof}
		Suppose $\HH$ is a rectangular hyperbola. Then, $\HH$ passes through two points at infinity along perpendicular directions, which we denote by $\infty_1, \infty_2$.  We have an equality of cross ratios $$(BA, BH; B\infty_1, B\infty_2) = (CH, CA; C\infty_2, C\infty_1),$$ since each line in the second pencil is perpendicular to the corresponding line in the first pencil. It immediately follows that $(BA, BH; B\infty_1, B\infty_2)$ $= (CA, CH; C\infty_1, C\infty_2)$. The point $H$ thus lies in the conic $\HH$. \\
		
		Conversely, suppose that $\HH$ contains $H$. Let $\infty_1$ be a point at infinity on $\HH$, and let $\infty_2$ be a point at infinity in the direction perpendicular to $\infty_1$. Once again, from an equality of cross ratios $(BA, BH; B\infty_1, B\infty_2) =$  $(CH, CA; C\infty_2, C\infty_1)$, we obtain an equality $$(BA, BH; B\infty_1, B\infty_2)=(CA, CH; C\infty_1, C\infty_2),$$ which implies that $\infty_2$ lies on $\HH$.  
	\end{proof}
	
	\begin{corollary}
		\label{cor:ortho}
		Let $A,B,C$ be three points on a rectangular hyperbola $\HH$. If the perpendicular from $A$ to $BC$ intersects $\HH$ again at $D$, then $D$ is the orthocenter of $ABC$.
	\end{corollary}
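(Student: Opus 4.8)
The plan is to derive this as an immediate consequence of Lemma \ref{lemma:basic}, using only the elementary fact that a line meets a conic in at most two points. Since $A$, $B$, $C$ all lie on $\HH$, the rectangular hyperbola $\HH$ is a circumconic of triangle $ABC$ of exactly the type covered by the Lemma. First I would invoke Lemma \ref{lemma:basic} to conclude that the orthocenter $H$ of $ABC$ lies on $\HH$.

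Next I would bring in the defining property of the orthocenter: the perpendicular from $A$ to $BC$ is precisely the altitude from $A$, and by definition this altitude passes through $H$. Thus $H$ is a common point of $\HH$ and of the perpendicular from $A$ to $BC$. By hypothesis this perpendicular meets $\HH$ at $A$ and again at $D$, so its intersection with $\HH$ is the two-point set $\{A, D\}$. Since $H$ lies in this set and $H \neq A$ for a triangle with no right angle at $A$, we are forced to conclude $H = D$, which is the assertion.

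The whole argument is therefore essentially a one-line deduction once Lemma \ref{lemma:basic} is granted, so there is no computational obstacle to speak of. The only point requiring care is the degenerate configuration in which the angle at $A$ is right, where $H$ coincides with $A$; I would handle this by noting that in that situation the perpendicular from $A$ to $BC$ is the tangent to $\HH$ at $A$, so no genuine second intersection point $D$ exists and the statement is vacuous. Under the standing assumption that $ABC$ is a nondegenerate triangle admitting a distinct second intersection $D$, the identification $D = H$ is unconditional.
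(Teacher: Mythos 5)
Your proof is correct and is exactly the intended deduction: the paper leaves this corollary without an explicit proof precisely because it follows from Lemma \ref{lemma:basic} in the one-line way you describe (the orthocenter lies on $\HH$ and on the altitude from $A$, which meets $\HH$ only at $A$ and $D$). Your remark about the right-angle-at-$A$ degeneracy is a sensible extra precaution but not something the paper dwells on.
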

	
	\bigskip
	
	\bigskip
	
	\bigskip
	
	--------------------------------------
	
	\textbf{Keywords and phrases: }Poncelet Points, Fuhrmann Triangle, Nagel Point
	
	\textbf{(2020)Mathematics Subject Classification: }51M04, 51P99 
	
	\newpage
	
	\begin{lemma}
		\label{lemma:basic2}
		Let $\HH_P$ intersect the circumcircle of $ABC$ at $H'$. Then, $H,H'$ are antipodal points in $\HH_P$.
	\end{lemma}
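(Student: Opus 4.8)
The plan is to exhibit the antipode of $H$ in $\HH_P$ explicitly and to identify it with $H'$. First note that by Lemma~\ref{lemma:basic} the rectangular circumhyperbola $\HH_P$ passes through $H$, and since both $\HH_P$ and the circumcircle of $ABC$ are conics through $A,B,C$, B\'ezout's theorem leaves exactly one further common point, which is the point $H'$ of the statement. Because $\HH_P$ is a central conic with center $Z$, the point reflection in $Z$ carries $\HH_P$ to itself, so the antipode $H^{*}:=2Z-H$ of $H$ again lies on $\HH_P$, and by construction $Z$ is the midpoint of $HH^{*}$. Thus the assertion that $H$ and $H'$ are antipodal in $\HH_P$ is precisely the equality $H^{*}=H'$, and it suffices to prove that $H^{*}$ lies on the circumcircle of $ABC$.

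The crux is a single homothety. Since $Z$ is the center of a rectangular circumhyperbola of $ABC$ — equivalently, the Poncelet point of $ABCP$ — it lies on the nine-point circle of $ABC$. Let $h$ be the homothety centered at $H$ with ratio $2$. As the nine-point circle is the image of the circumcircle under the homothety centered at $H$ of ratio $\tfrac12$, the map $h$ sends the nine-point circle onto the circumcircle. Computing $h(Z)=H+2(Z-H)=2Z-H=H^{*}$, we see that $H^{*}$ is the image under $h$ of a point of the nine-point circle, hence $H^{*}$ lies on the circumcircle, as required. Combining the two observations, $H^{*}$ lies on both $\HH_P$ and the circumcircle; provided $H^{*}\notin\{A,B,C\}$ it must therefore be the fourth intersection $H'$, and then $H$ and $H'=H^{*}$ are antipodal in $\HH_P$.

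The step I expect to be the main obstacle is the clean justification that $Z$ lies on the nine-point circle. This is the classical fact that the center of every conic through the orthocentric quadrangle $A,B,C,H$ lies on the nine-point conic of that quadrangle, which for an orthocentric system degenerates to the nine-point circle (it passes through the three midpoints of the sides and the three midpoints of $HA,HB,HC$); stated for $Z$ itself, it is the standard property that the Poncelet point lies on the nine-point circle. The only remaining point needing care is the degenerate configuration in which $H^{*}$ coincides with one of $A,B,C$, where $H'$ falls on a vertex; in that case the antipodality follows by a limiting argument or by direct verification.
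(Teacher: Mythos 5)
Your proof is correct, but it takes a genuinely different route from the paper. The paper argues top-down: by Lemma~\ref{lemma:basic}, $\HH_P$ is the unique rectangular circumhyperbola through $H'$, so its center $Z$ is also the Poncelet point of the cyclic quadrilateral $ABCH'$; invoking the fact that the Poncelet point of a cyclic quadrilateral is its anticenter, and that the anticenter of $ABCH'$ is the midpoint of $HH'$, the antipodality is immediate. You instead construct the antipode $H^{*}=2Z-H$ directly (it lies on $\HH_P$ by central symmetry) and show it lands on the circumcircle via the homothety at $H$ with ratio $2$, using the classical fact that $Z$ lies on the nine-point circle of $ABC$; then $H^{*}$ must be the fourth intersection $H'$. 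The paper's version buys a one-line identification of $Z$ as the midpoint of $HH'$ at the price of citing the anticenter characterization of the Poncelet point, whereas yours needs only the nine-point-circle property of $Z$ (which you correctly ground in the nine-point conic of the orthocentric quadrangle $A,B,C,H$) together with the standard $H$-homothety, making it somewhat more self-contained and constructive; the small cost is the degenerate case where $H^{*}$ coincides with a vertex (the hyperbola tangent to the circumcircle there), which you rightly flag and which the paper's formulation sidesteps. Both arguments are sound.
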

	
	\begin{proof}
		Because of Lemma \ref{lemma:basic}, $\HH_P$ is the unique rectangular hyperbola containing $H'$. Hence, $Z$ is also the Poncelet point of $ABCH'$. The Poncelet point of a cyclic quadrilateral is its anticenter (See \cite{ivan}, \textbf{Theorem 3.4}). But the anticenter of $ABCH'$ is the midpoint of $HH'$.  
	\end{proof}
	
	\section{Preliminaries}
	Let $\tri_A, \tri_B$, and $\tri_C$ denote the triangles $AH_BH_C, BH_CH_A$, and $CH_AH_B$ respectively.
	\begin{prop}
		\label{prop:common_ortho}
		The triangles $\tri_A, \tri_B, \tri_C$ have a common orthocenter.
	\end{prop}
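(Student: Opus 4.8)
The plan is to use the rectangular hyperbola $\HH_P$ as a common ambient curve carrying every point in sight. First I would observe, via Corollary \ref{cor:ortho}, that all the orthocenters lie on $\HH_P$: since $B,C,P$ all lie on $\HH_P$, the orthocenter $H_A$ of $BPC$ is the second intersection of the altitude from $B$ with $\HH_P$, so $H_A\in\HH_P$; symmetrically $H_B,H_C\in\HH_P$, and $H\in\HH_P$ by Lemma \ref{lemma:basic}. Hence all eight points $A,B,C,P,H,H_A,H_B,H_C$ lie on $\HH_P$, and in particular each of $\tri_A,\tri_B,\tri_C$ is inscribed in $\HH_P$. Applying Corollary \ref{cor:ortho} once more, the orthocenter of each $\tri_\bullet$ again lies on $\HH_P$, so it suffices to prove that these three orthocenters are the \emph{same} point of $\HH_P$.

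Next I would set up coordinates adapted to $\HH_P$. A rectangular hyperbola has perpendicular asymptotes, so after a rigid motion (which preserves orthogonality, orthocenters, and the rectangularity of the curve) I may assume $\HH_P$ is $xy=c^2$, parametrized by $t\mapsto(ct,\,c/t)$. The one computational fact I need is a parameter formula for the orthocenter: if three points of $\HH_P$ have parameters $t_1,t_2,t_3$, then their orthocenter is the point with parameter $-1/(t_1t_2t_3)$. This follows in a line from Corollary \ref{cor:ortho} and a slope computation, since the chord joining the parameters $s,t$ has slope $-1/(st)$; writing $t_4$ for the orthocenter's parameter, the altitude condition $P_1P_4\perp P_2P_3$ reads $-1/(t_1t_4)=t_2t_3$, i.e. $t_1t_2t_3t_4=-1$.

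With this formula in hand I would assign parameters $a,b,c,p$ to $A,B,C,P$. Then $H_A,H_B,H_C$, being the orthocenters of $BPC,CPA,PAB$, have parameters $-1/(bcp),\,-1/(cap),\,-1/(abp)$ respectively, and the orthocenters of $\tri_A,\tri_B,\tri_C$ are governed by the products
\[
\tri_A:\ a\cdot\frac{-1}{cap}\cdot\frac{-1}{abp}=\frac{1}{abc\,p^2},\qquad
\tri_B:\ b\cdot\frac{-1}{abp}\cdot\frac{-1}{bcp}=\frac{1}{abc\,p^2},\qquad
\tri_C:\ c\cdot\frac{-1}{bcp}\cdot\frac{-1}{cap}=\frac{1}{abc\,p^2},
\]
so all three orthocenters share the parameter $-abc\,p^2$ and therefore coincide. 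I expect the only points requiring care to be the reduction step—checking that $\HH_P$ may be normalized to $xy=c^2$ by an orthocenter-preserving isometry—and the bookkeeping of the orthocenter parameter formula; once these are fixed, the final coincidence is a forced symmetric-monomial identity.
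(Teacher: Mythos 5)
Your proof is correct, but it takes a genuinely different route from the paper's. The paper argues synthetically: it observes that $AH_C \parallel CH_A$ (both being perpendicular to $BP$), lets $X$ be the orthocenter of $\tri_A$, notes $X \in \HH_P$ by Lemma \ref{lemma:basic}, and then transfers the perpendicularity $XH_B \perp AH_C$ to $XH_B \perp CH_A$ so that Corollary \ref{cor:ortho} identifies $X$ as the orthocenter of $\tri_C$ (and similarly of $\tri_B$). You instead normalize $\HH_P$ to $xy=c^2$ by an isometry, derive the orthocenter parameter formula $t_1t_2t_3t_4=-1$ from the chord-slope identity, and verify that all three orthocenters carry the parameter $-abc\,p^2$; your arithmetic checks out (each product collapses to $1/(abc\,p^2)$), and the reduction to $xy=c^2$ is legitimate since a rotation, translation, and if necessary a reflection preserve orthocenters and rectangularity. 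This is essentially the coordinate proof the authors gesture at in their Remark following the proposition without writing it out. What your version buys is an explicit identification of the common orthocenter as the point of parameter $-abc\,p^2$, making the symmetry in $A,B,C$ (and the asymmetry in $P$) transparent, and the same formula instantly re-proves Proposition \ref{prop:orthologic}; what the paper's version buys is that it stays within the synthetic toolkit of Lemma \ref{lemma:basic} and Corollary \ref{cor:ortho} and produces the parallelism $AH_C \parallel CH_A$, which is reused elsewhere. The only cosmetic flaw is the collision between $c$ as the hyperbola constant and $c$ as the parameter of $C$; rename one of them.
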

	\begin{proof}
		See that $AH_C$ and $CH_A$ are parallel since they both are perpendicular to $BP$. Let $X$ be the orthocenter of $\tri_A$. Then, $XH_B \perp AH_C$, so $XH_B \perp CH_A$. From Lemma \ref{lemma:basic}, $X$ lies on $\HH_P$. Applying Corollary \ref{cor:ortho}, we get that $X$ is the orthocenter of $\tri_C$. A similar argument  shows that $X$ is also the orthocenter of $\tri_B$.
	\end{proof}
	\begin{rem}
		The usage of Cartesian coordinates provides straightforward proofs to the above and many of the results in this section.
	\end{rem}
	We retain the notation from the proof, and denote the common orthocenter by $X$. 
	\begin{corollary}
		\label{cor:conc}
		The circumcircles of $\tri_A, \tri_B,$ and $\tri_C$ are concurrent at a point on $\HH_P$.
	\end{corollary}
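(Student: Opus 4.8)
The plan is to show that all three circumcircles meet $\HH_P$ again at one and the same point, namely the reflection $X'$ of the common orthocenter $X$ in the center $Z$ of $\HH_P$.

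First I would record which points lie on $\HH_P$. Since $B,P,C$ lie on $\HH_P$, Corollary \ref{cor:ortho} applied to the perpendicular from $B$ to $PC$ shows that the orthocenter $H_A$ of $BPC$ lies on $\HH_P$; the same reasoning gives $H_B,H_C\in\HH_P$. Hence each of $\tri_A,\tri_B,\tri_C$ is a triangle inscribed in $\HH_P$, and by Proposition \ref{prop:common_ortho} their common orthocenter $X$ lies on $\HH_P$ as well.

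The key step is a direct generalization of Lemma \ref{lemma:basic2}: if $T$ is any triangle inscribed in $\HH_P$ with orthocenter $X$, then the circumcircle of $T$ meets $\HH_P$ again at the point $X'$ antipodal to $X$ in $\HH_P$, i.e.\ $Z$ is the midpoint of $XX'$. To prove this I would argue exactly as in Lemma \ref{lemma:basic2}. A circle and a conic meet in four points; three of them are the (real) vertices of $T$, so the fourth point $X'$ is real as well. The hyperbola $\HH_P$ is then the unique rectangular circum-conic through the three vertices of $T$ together with $X'$, so $Z$ is the Poncelet point of these four concyclic points. Using the cited fact that the Poncelet point of a cyclic quadrilateral is its anticenter, together with the observation (as in Lemma \ref{lemma:basic2}) that the anticenter of a cyclic quadrilateral made up of a triangle and a fourth vertex $X'$ is the midpoint of the segment joining $X'$ to the orthocenter of that triangle, I conclude that $Z$ is the midpoint of $XX'$.

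Finally I would apply this lemma to $\tri_A,\tri_B,\tri_C$ simultaneously. Since all three share the orthocenter $X$, each of their circumcircles meets $\HH_P$ again at the single point $X'=2Z-X$, which lies on $\HH_P$; this is precisely the asserted concurrency. The main obstacle is the fourth-intersection lemma: one must be careful that the fourth point of $\HH_P$ on each circumcircle is genuinely real and distinct from the vertices, so that the concurrency point is well defined, and that the identification of the anticenter with the midpoint of $XX'$ is carried out in the same manner as in Lemma \ref{lemma:basic2}.
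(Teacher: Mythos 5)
Your proof is correct and follows essentially the same route as the paper: the paper leaves the corollary without an explicit proof, but its later use of the point $X'$ (in the proof of Proposition \ref{prop:navneel}) shows the intended argument is exactly yours — each $\tri_A,\tri_B,\tri_C$ is inscribed in $\HH_P$ with common orthocenter $X$, so by (the evident generalization of) Lemma \ref{lemma:basic2} each circumcircle meets $\HH_P$ again at the antipode $X'$ of $X$. Your extra care about the reality of the fourth intersection and the anticenter identification is a reasonable tightening of the same argument, not a different approach.
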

	
	\begin{figure}[!h]
		\includegraphics{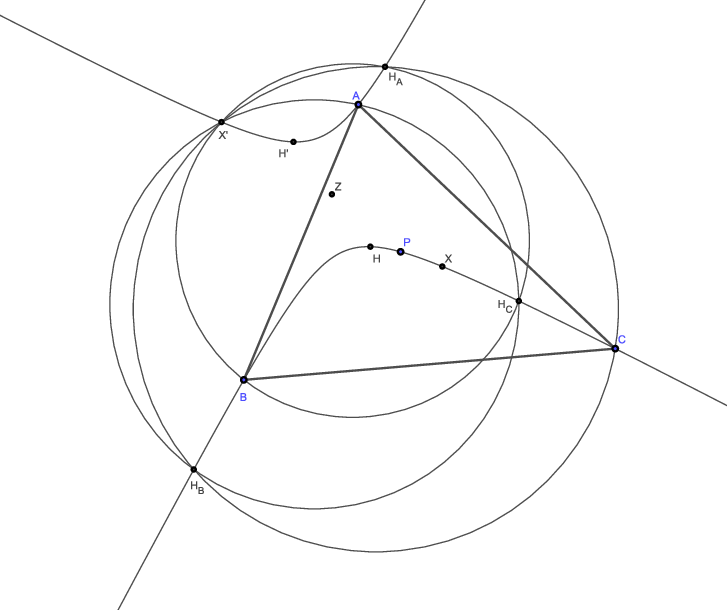}
		\caption{Concurrent Circumcircles}
		\label{fig:conc_circum}
	\end{figure}
	
	We can further characterize the common point of these circumcircles.
	\begin{prop}[Navneel Singhal]
		\label{prop:navneel}
		Let $Q$ be the isogonal conjugate of $P$, and let $Q^*$ denote the inverse of $Q$ in the circumcircle of $ABC$. Then the circumcircles of $\tri_A, \tri_B,$ and $\tri_C$ pass through the midpoint of $QQ^*$.
	\end{prop}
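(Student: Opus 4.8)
The plan is to pass to complex coordinates with the circumcircle of $ABC$ as the unit circle, writing $a,b,c$ for the vertices (so $\bar a=1/a$, etc.) and $p$ for $P$, and to exploit the fact, already implicit in Lemmas~\ref{lemma:basic}--\ref{lemma:basic2}, that $\HH_P$ is the isogonal conjugate of a line through the circumcenter $O$. First I would record the isogonal map in this model: writing $\sigma_1=a+b+c$, $\sigma_2=ab+bc+ca$, $\sigma_3=abc$, reflecting the cevian $AP$ over the bisector $A(-\sqrt{bc})$ and intersecting it with the analogous reflected cevian from $B$ yields
\[
Q=\frac{p+\sigma_2\bar p-\sigma_3\bar p^{\,2}-\sigma_1}{p\bar p-1},
\]
which one verifies on the pair $O\leftrightarrow H$. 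Since $\HH_P$ is a rectangular circumhyperbola through $P$, its isogonal image is exactly the line $OQ$, so every point of $\HH_P$ is the isogonal conjugate of a point of $OQ$; the inverse $Q^{\ast}=1/\bar Q$ of $Q$ also lies on $OQ$, which is what makes an identification of the concurrency point with a point built from $Q$ and $Q^{\ast}$ plausible.

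Next I would pin down the concurrency point $N$ of Corollary~\ref{cor:conc} intrinsically. By Proposition~\ref{prop:common_ortho} the triangles $\tri_A,\tri_B,\tri_C$ share the orthocenter $X$, and $X\in\HH_P$; combined with the classical fact that the circumcircle of a triangle inscribed in a rectangular hyperbola meets the hyperbola again at the point antipodal, through the center $Z$, to its orthocenter, this shows all three circumcircles pass through $N=2Z-X$. Thus I only need $X$ and $Z$ in coordinates. For the center, Lemma~\ref{lemma:basic2} gives $Z=\tfrac12(H+H')$ with $H=\sigma_1$, where $H'$ (the second intersection of $\HH_P$ with the circumcircle) is the isogonal conjugate of the infinite point of $OQ$; substituting that infinite point into the formula above gives the clean value $H'=-\sigma_3\,\bar Q/Q$. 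For $X$ I would compute $H_B$ (the orthocenter of $CPA$) and $H_C$ (of $PAB$) from the perpendicularity relations, obtaining for instance
\[
H_B=\frac{ac\,\bar p\,(a+c-p)+p^2-a^2-c^2}{ac\,\bar p-(a+c)+p},
\]
and $H_C$ by the symmetry $c\leftrightarrow b$, and then take the orthocenter of $A,H_B,H_C$.

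With $N=2Z-X$ now an explicit rational expression in $p,\bar p$ and the $\sigma_i$, the proposition becomes the single identity asserting that $N$ agrees with the midpoint of $QQ^{\ast}$. The cleanest organizing principle here is isogonal conjugation itself: because $N\in\HH_P$, its isogonal conjugate $N^{\ast}$ lies on the line $OQ$, on which $Q^{\ast}$ also lies, so the entire identity can be tested along that single line, where it collapses to matching one real parameter along $OQ$; concretely I would show that $N^{\ast}$ is the midpoint of $QQ^{\ast}$, pinning the concurrency point down through the very conjugation that defines $\HH_P$. The main obstacle is exactly the two pieces feeding this last step --- the orthocenter computation producing $X$ from $A,H_B,H_C$ is the heaviest part of the argument, and the subsequent simplification of $2Z-X$ (or of $N^{\ast}$) into the desired form forces sustained use of the relations $\bar\sigma_1=\sigma_2/\sigma_3$, $\bar\sigma_2=\sigma_1/\sigma_3$, $\bar\sigma_3=1/\sigma_3$; everything else is bookkeeping.
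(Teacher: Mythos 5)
Your plan is sound and its ingredients check out --- the isogonal-conjugation formula you give satisfies the standard relations $p+q+\sigma_3\bar p\bar q=\sigma_1$ and $pq+\sigma_3(\bar p+\bar q)=\sigma_2$, the value $H'=-\sigma_3\bar Q/Q$ is indeed the isogonal conjugate of the infinite point of $OQ$, and identifying the concurrency point as $2Z-X$, the $\HH_P$-antipode of the common orthocenter, is exactly how the paper locates it as well --- but your route is genuinely different from the paper's. Where you propose to compute $X$ explicitly as the orthocenter of $A,H_B,H_C$ and then simplify $2Z-X$, the paper never computes: it observes that isogonal conjugation is a projectivity from the line $OQ$ onto the conic $\HH_P$ (equation (\ref{eq:1})), transports the harmonic quadruple $(Q,Q^*;M,Q_\infty)=-1$ to $(P,P';M^*,H')=-1$ on $\HH_P$, and uses a Pascal-theorem lemma ($HX'\parallel PP'$) to obtain $(P,P';X',H')=-1$ by projecting from $H$, forcing $X'=M^*$. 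The projective argument buys brevity and an independently useful lemma; yours buys self-containedness and mechanical verifiability, at the price of the orthocenter-of-orthocenters computation, which is the one step you assert rather than carry out and which is genuinely heavy (each of $H_B,H_C$ is already a degree-two rational function of $p,\bar p$).

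One point of substance: you waver between proving that ``$N$ agrees with the midpoint of $QQ^*$'' and proving that ``$N^*$ is the midpoint of $QQ^*$.'' These differ by an isogonal conjugation, and only the second is true in general: for $P=I$ the midpoint of $QQ^*=II^*$ lies on ray $OI$ at distance $R(R-r)/OI$ from $O$, typically far outside the triangle, while the concurrency point $2F_e-N_a$ is not. The second version is also precisely what the paper's own proof establishes ($X'=M^*$), so your target identity should be $N^*=\tfrac12(Q+Q^*)$, equivalently $N=M^*$; with that fixed, your reduction to matching a single real parameter is legitimate, since both $N^*$ and $\tfrac12(Q+Q^*)$ lie on the line $OQ$.
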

	Let $P'$ denote the antipode of $P$ in $\HH_P$. We require a Lemma by AoPS user \textit{enhanced} for the proof of the proposition.
	\begin{lemma}
		Let $X'$ be the antipode of $X$ in $\HH_P$. Then $HX' \parallel PP'$.
	\end{lemma}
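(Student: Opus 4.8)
The plan is to argue in Cartesian coordinates, as the remark following Proposition \ref{prop:common_ortho} suggests. I would fix coordinates so that the center $Z$ of $\HH_P$ is the origin and $\HH_P$ is the rectangular hyperbola $xy=1$; every point then has the form $(t,1/t)$, and I write $a,b,c,p$ for the parameters of $A,B,C,P$. Two elementary facts drive everything. First, the chord joining the parameters $t_1,t_2$ has slope $-1/(t_1t_2)$, so two chords are parallel exactly when the products of their endpoint parameters coincide. Second, the orthocenter of a triangle with vertex parameters $t_1,t_2,t_3$ has parameter $-1/(t_1t_2t_3)$: indeed the altitude from $t_1$ has slope $t_2t_3$ (the negative reciprocal of the slope of the opposite side), and by Corollary \ref{cor:ortho} it meets $\HH_P$ again at the orthocenter, whose parameter $t$ satisfies $-1/(t_1t)=t_2t_3$.

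With these tools the computation is mechanical. Applying the orthocenter formula once gives $H,H_A,H_B,H_C$ the parameters $-1/(abc)$, $-1/(bcp)$, $-1/(cap)$, $-1/(abp)$; in particular all four lie on $\HH_P$, as expected. Applying it a second time to $\tri_A=AH_BH_C$ yields for $X$ the parameter $-abcp^2$. Since the antipodal map on a central conic is the reflection through $Z$, which sends the parameter $t$ to $-t$, the points $X'$ and $P'$ carry the parameters $abcp^2$ and $-p$. It remains only to compare products: the chord $HX'$ has endpoint-parameter product $(-1/(abc))(abcp^2)=-p^2$, while the diameter $PP'$, viewed as the chord through $p$ and $-p$, has product $(p)(-p)=-p^2$. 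The two agree, so $HX'\parallel PP'$.

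I expect no genuine obstacle here; the proof is essentially a bookkeeping exercise. The only points that require care are the double application of the orthocenter formula to obtain the parameter of $X$ (where the arithmetic must be tracked through the two nested reciprocals), and the observation that ``antipode in $\HH_P$'' is precisely the central reflection through $Z$ and hence negates the parameter. Framing the parallelism through the equality of endpoint-parameter products is what keeps the final comparison to a single line and treats the diameter $PP'$ on the same footing as the ordinary chord $HX'$.
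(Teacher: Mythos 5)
Your proof is correct, and every step checks out: the parameters of $H, H_A, H_B, H_C$ are as you state, the double application of the orthocenter formula gives $X$ the parameter $-abcp^2$ (consistently for $\tri_A$, $\tri_B$, $\tri_C$, which also re-proves Proposition \ref{prop:common_ortho}), and the endpoint-product criterion correctly identifies both $HX'$ and $PP'$ as chords of slope $1/p^2$. The normalization to $xy=1$ is legitimate since any rectangular hyperbola is carried to this one by a similarity, which preserves perpendicularity. However, your route is genuinely different from the paper's. The paper argues synthetically: Pascal's theorem applied to the degenerate hexagon $PXAHPH_A$ shows that $HX$ is parallel to the tangent $\tau$ to $\HH_P$ at $P$; hence the diameter $PZ$ bisects the chord $HX$, and since $HXH'X'$ is a parallelogram centered at $Z$, the median line $PZ$ of triangle-like configuration $HXX'$ is parallel to $HX'$. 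What your approach buys is uniformity and transparency — the same two formulas (chord slope $-1/(t_1t_2)$ and orthocenter parameter $-1/(t_1t_2t_3)$) dispatch this lemma and several neighboring results with no case analysis, exactly as the paper's remark after Proposition \ref{prop:common_ortho} anticipates. What the paper's argument buys is projective content: the intermediate fact $HX \parallel \tau$ is itself used in the proof of Proposition \ref{prop:navneel} (to compute the cross ratio $(P,P';X',H')_{\HH_P}$ by projecting from $H$), so the synthetic proof produces a reusable byproduct that your computation leaves implicit.
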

	\begin{proof}
		Let $\tau$ denote the tangent line to $\HH_P$ at $P$. Pascal's Theorem applied to $PXAHPH_A$ shows that the points $HX \cap \tau,$ $AP \cap XH_A,$ and $AH \cap PH_A$ are collinear. Now, $AP$ and $XH_A$ are both perpendicular to $BH_C$, so their intersection is at infinity. Similarly, $AH$ and $PH_A$ are perpendicular to $BC$ and so their intersection is also at infinity. Hence, $HX$ is parallel to $\tau$. \\
		The line joining the midpoints of parallel chords of a conic passes through the center. So, $PZ$ bisects the segment $HX$. But, $HXH'X'$ is a parallelogram with center $Z$. So, $PZ$ $\parallel HX'$.
	\end{proof}
	\\
	
	\begin{proof}
		(Of Proposition \ref{prop:navneel}) The circumconic $\HH_P$ is the isogonal conjugate of the line $OQ$. Choose points $Q_1, \dots, Q_4$ on the line $OQ$, and let $P_1, \dots, P_4$ denote their isogonal conjugates. Then we have an equaliy of cross ratios $(AQ_1, AQ_2; AQ_3, AQ_4)$ $= (AP_1, AP_2; AP_3, AP_4)$ from which we obtain an equality 
		\begin{equation}
			(Q_1, Q_2; Q_3, Q_4)_{OQ} = (P_1, P_2; P_3, P_4)_{\HH_P}, \label{eq:1}
		\end{equation} 
		where the first and second cross ratios are taken with respect to line $OQ$ and conic $\HH_P$ respectively.   \\
		Let $Q_\infty, M$ denote the point at infinity along $OQ$ and the midpoint of $QQ^*$ respectively. The pairs $(P', Q^*)$ and $(H', Q_\infty)$ are isogonal conjugates (see \cite{ivan}, \textbf{Theorem 2.2}). Let $M^*$ denote the isogonal conjugate of $M$. Then from (\ref{eq:1}), we obtain $(P, P'; M^*, H')_{\HH_P} = (Q,Q^*; M, Q_\infty)_{OQ} = -1$. \\
		
		If $P_\infty$ is the point at infinity along $PP'$, the Lemma gives $$(P,P'; X', H')_{\HH_P}\stackrel{H}{=} (P, P', P_\infty, Z) = -1,$$ where the first equality is obtained by projecting from $H$. So, $X'$ is the isogonal conjugate of $M$. But $X'$ is common to the circumcircles of $\tri_A, \tri_B,$ and $\tri_C$ from Lemma \ref{lemma:basic2}.
	\end{proof}
	\subsection{On Orthologic Triangles inscribed in a Rectangular Hyperbola}
	We present a result about orthologic triangles which follows immediately from our discussions above. 
	\begin{prop}
		\label{prop:orthologic}
		Let $A,B,C,D,E,F$ be six points on a rectangular hyperbola $\HH$. Suppose that triangles $ABC$ and $DEF$ are orthologic, and suppose further that one of the orthology centers, $P$, lies on $\HH$. Then the other orthology center lies on $\HH$ as well.
	\end{prop}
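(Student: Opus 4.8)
The plan is to reinterpret \emph{both} orthology centers as orthocenters of sub-triangles inscribed in $\HH$, via Corollary \ref{cor:ortho}, and then reduce the claim to a symmetric relation among the six points that is manifestly unchanged when the roles of the two triangles are swapped.

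First I would exploit the hypothesis that the orthology center $P$ lies on $\HH$. By definition $P$ is the common point of the perpendiculars from $A,B,C$ to $EF,FD,DE$. The perpendicular from $A$ to $EF$ is a chord of $\HH$ through $A$ meeting $\HH$ again at $P$, so Corollary \ref{cor:ortho} identifies $P$ as the orthocenter of triangle $AEF$. Running the identical argument on the other two perpendiculars shows that $P$ is simultaneously the orthocenter of $BFD$ and of $CDE$.

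Next, following the remark, I would introduce coordinates: place $\HH$ as $xy=1$ and parametrize its points by $P_t=(t,1/t)$, writing $A=P_a,\dots,F=P_f$. In this model the chord $P_sP_t$ has slope $-1/(st)$, so two chords $P_sP_t$ and $P_uP_v$ are perpendicular exactly when $stuv=-1$, and the orthocenter of $P_rP_sP_t$ is $P_{-1/(rst)}$ (this is precisely Corollary \ref{cor:ortho} read in coordinates). The statement that $P\in\HH$ is the common orthocenter of $AEF,BFD,CDE$ then reads $p=-1/(aef)=-1/(bfd)=-1/(cde)$, that is, $aef=bfd=cde$.

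Finally I would locate the second orthology center in the same way. Its defining perpendiculars are those from $D,E,F$ to $BC,CA,AB$; by Corollary \ref{cor:ortho} the perpendicular from $D$ to $BC$ meets $\HH$ again at the orthocenter $P_{-1/(dbc)}$ of $DBC$, and likewise for $E$ and $F$. Cancelling a common factor in $aef=bfd=cde$ yields $dbc=eca=fab$, so these three orthocenters coincide at a single point $P_{p'}\in\HH$; being the common point of the three perpendiculars, $P_{p'}$ is exactly the second orthology center, and it lies on $\HH$. I expect the only genuine care to lie in the reinterpretation step and in the degenerate configurations: Corollary \ref{cor:ortho} requires the perpendicular to meet $\HH$ \emph{again} at $P$, which fails when that perpendicular is tangent (i.e.\ $P=A$), and one should note that the resulting $P_{p'}$ is the honest orthology center rather than a stray second intersection — but the latter is immediate, since we have exhibited a point lying on all three perpendiculars at once. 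The algebra itself is a one-line cancellation and is not the obstacle.
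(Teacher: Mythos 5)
Your proof is correct, but it takes a genuinely different route from the paper's. The paper makes the same opening move as you do --- using Corollary \ref{cor:ortho} to reinterpret the given orthology center (it takes the WLOG the other way: $DP\perp BC$ etc., so that $D,E,F$ become the orthocenters of $BPC$, $CPA$, $APB$ and the configuration is literally that of Section 2) --- but then it finishes in one line by citing Proposition \ref{prop:common_ortho}: the common orthocenter $X$ of $AEF$, $BFD$, $CDE$ already lies on $\HH$ and is visibly the second orthology center. You instead bypass Proposition \ref{prop:common_ortho} entirely and compute on the model $xy=1$, where perpendicularity of chords $P_sP_t\perp P_uP_v$ is $stuv=-1$ and the orthocenter of $P_rP_sP_t$ is $P_{-1/(rst)}$; the whole proposition then collapses to the cancellation $aef=bfd=cde\iff bcd=cae=abf$, which also makes the symmetry between the two orthology centers completely transparent and re-proves Proposition \ref{prop:common_ortho} as a by-product. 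This is essentially the coordinate proof that Remark 2.1 of the paper gestures at, and it is self-contained; the paper's version buys reuse of the synthetic machinery already built (the parallel-chords and Pascal arguments) and avoids normalizing the hyperbola. Your handling of the degenerate case is the only slightly loose point: the reinterpretation of $P$ as the orthocenter of $AEF$ fails not when the perpendicular from $A$ is tangent (tangency just means the orthocenter of $AEF$ is $A$ itself, consistent with $a^2ef=-1$) but when $P$ happens to coincide with $A$; in either event the algebraic identity $p=-1/(aef)$ still holds by continuity or by direct verification, so nothing breaks.
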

	\begin{proof}
		Without losing generality, assume that $DP \perp BC$, $EP \perp AC$, and $FP \perp AB$. Then, Corollary \ref{cor:ortho} says that $D,E,F$ are the orthocenters of triangles $BPC,CPA,APB$ respectively. Let $Q$ be the common orthocenter of triangles $AEF, BFD, CDE$ (see Proposition \ref{prop:common_ortho}). Then, $Q$ lies on $\HH$ and $AQ \perp EF, BQ\perp FD, CQ\perp DE$, so $Q$ is the other orthology center of $ABC$ and $DEF$.
	\end{proof}
	
	\section{Concyclic Centroids}
	Let $G_A,G_B,G_C,$ and $G$ denote the centroids of $\tri_A, \tri_B, \tri_C,$ and $ABC$ respectively. We have a remarkable concyclicity of the points $G,G_A,G_B,$ and $G_C$, and we state it as a Theorem. 
	\begin{figure}[h]
		\includegraphics{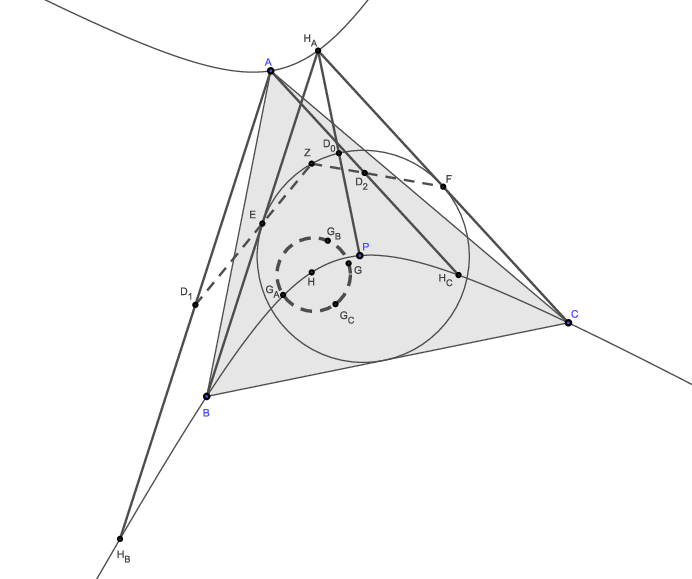}
		\caption{Concyclic Centroids}
	\end{figure}
	\begin{theorem}[Concyclic Centroids]
		\label{thm:concyclic}
		$G_A,G_B,G_C,$ and $G$ lie on a circle.
	\end{theorem}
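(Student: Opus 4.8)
The plan is to use coordinates, which the authors themselves hint at in the Remark. I would set up a Cartesian coordinate system adapted to the rectangular hyperbola $\HH_P$. A rectangular hyperbola can always be written as $xy = k$ (after rotating so its asymptotes are the coordinate axes), and the great advantage of this normalization is that the orthocenter operation becomes algebraic: by Corollary~\ref{cor:ortho}, if three points $(t_i, k/t_i)$, $i=1,2,3$, lie on $xy=k$, then their orthocenter is again a point on the hyperbola, and a short computation shows it is the point with parameter $t_4 = -k^2/(t_1 t_2 t_3)$. I would record this ``fourth point'' formula first, since it is the engine that drives everything.

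With this in hand, I would parametrize the five relevant points $A,B,C,P$ and (if needed) $H$ on $\HH_P$ by parameters $a,b,c,p$ (and $h$). The orthocenter $H$ of $ABC$ has parameter $h = -k^2/(abc)$; the orthocenters $H_A,H_B,H_C$ of $BPC,CPA,PAB$ have parameters $-k^2/(bcp)$, $-k^2/(cap)$, $-k^2/(abp)$ respectively. The vertices of $\tri_A = A H_B H_C$, $\tri_B$, $\tri_C$ then all carry explicit parameters, so I can write each centroid $G_A,G_B,G_C$ and $G$ as an explicit rational function of $a,b,c,p,k$ by averaging the three coordinate pairs $(t_i, k/t_i)$. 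The centroid of three hyperbola points is \emph{not} on the hyperbola in general, so these are genuine plane points with messy but completely explicit coordinates.

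The remaining task is then the concyclicity of four explicit points, which I would test by plugging into the $4\times 4$ circle determinant
\begin{equation}
\det\begin{pmatrix} x^2+y^2 & x & y & 1 \end{pmatrix}_{\text{four rows}} = 0, \label{eq:plan-det}
\end{equation}
where each row is evaluated at one of $G,G_A,G_B,G_C$. Proving the theorem amounts to showing this determinant vanishes identically as a rational function of $a,b,c,p,k$. I expect the determinant to factor, after clearing denominators, into an obviously symmetric expression times a vanishing factor; the symmetry of the construction under permuting $A,B,C$ (with $P$ playing a distinguished role) should be exploited to predict and organize the factorization rather than expanding blindly.

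The main obstacle will be the sheer size of the symbolic expansion: the common denominators in the centroid coordinates are products of the parameters, and the circle determinant is quartic in those coordinates, so a direct expansion is large. To keep this tractable I would first translate coordinates so that $G$ (or the expected circle center) sits at the origin, reducing \eqref{eq:plan-det} to showing three points are equidistant from a common center, and I would factor out the shared denominators before expanding. An alternative, more synthetic route that avoids the determinant is to identify the center of the purported circle directly --- one natural candidate is the centroid of the four-point configuration or a point tied to $Z$ and $G$ --- and then verify the four equal-distance relations; if such a center can be guessed from the coordinate data, the equidistance check is markedly lighter than the full determinant and also yields the circle's center and radius as a bonus.
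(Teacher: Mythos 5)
Your route is genuinely different from the paper's, and its skeleton is sound: the parametrization $(t,k/t)$ of the rectangular hyperbola is legitimate (after a rotation and translation every rectangular hyperbola has this form, and $A,B,C,P$ all lie on $\HH_P$), and your ``fourth point'' formula is correct --- the chord joining parameters $s,t$ has slope $-k/(st)$, so perpendicularity of the chords $t_1t_2$ and $t_3t_4$ is exactly $k^2/(t_1t_2t_3t_4)=-1$, giving $t_4=-k^2/(t_1t_2t_3)$, symmetric in $t_1,t_2,t_3$ and hence the orthocenter. This correctly places $H_A,H_B,H_C$ at parameters $-k^2/(bcp)$, $-k^2/(cap)$, $-k^2/(abp)$ and makes all four centroids explicit. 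The paper instead argues synthetically: it first shows $GG_A\perp AP$ (via the midline of the trapezoid $BH_CCH_B$ and a homothety of ratio $\frac23$ at $A$), so that $\ma G_CGG_B=\ma CPB$, and then shows $\ma G_CG_AG_B=\ma CPB$ by observing that $G_AG_B$ and $G_AG_C$ are parallel to lines joining midpoints of parallel chords of $\HH_P$, which meet at the center $Z$, and that $Z$ lies on the nine-point circle of $BPC$. The synthetic proof buys something your computation would not hand you for free: the perpendicularities $GG_A\perp AP$ etc.\ are quoted again verbatim in the proof of Theorem~5.1, so they are not a disposable intermediate step. Your approach, if completed, would buy explicit coordinates for the circle's center and radius, and a uniform verification immune to configuration issues (concyclicity being a closed condition, the generic identity suffices).

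The gap is that the decisive step is not carried out: you assert that the $4\times4$ circle determinant ``should'' vanish and ``should'' factor symmetrically, but that vanishing \emph{is} the theorem, and nothing in the proposal establishes it. Everything before that point is routine bookkeeping; everything after it is the conclusion. As written this is a correct and workable plan, not a proof --- to close it you must actually expand (or otherwise force the vanishing of) the determinant in $a,b,c,p,k$, or exhibit a candidate center and verify the three equal-distance identities. Until one of those computations is exhibited, the proposal does not prove the statement.
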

	
	\begin{proof}
		First, we claim that $GG_A \perp AP$. Indeed, the quadrilateral $BH_CCH_B$ is a trapezoid, and so the line joining the midpoints $M_1$ and $M_2$ of the segments $BC$ and $H_CH_B$ is parallel to $BH_C$ and $CH_B$, which are in turn perpendicular to $AP$. But the segment $GG_A$ is obtained by scaling $M_1M_2$ about point $A$ by a factor of $\frac23$. So, $GG_A \perp AP$. Similarly, one shows $GG_B$ and $GG_C$ are perpendicular to $BP$ and $CP$. It is now clear that $\ma G_CGG_B = \ma CPB$.\\
		
		Let $D_1$ and $E$ denote the midpoints of segments $AH_B$ and $BH_A$. Then, $G_AG_B$ is obtained by scaling $D_1E$ about $H_C$ by a factor of $\frac23$. Hence, $G_AG_B \parallel D_1E$. Similarly, if we let $D_2$ and $F$ denote the midpoints of $AH_C$ and $CH_A$, then $G_AG_C \parallel D_2F$. However, $D_1E$ and $D_2F$ are lines joining the midpoints of parallel chords of a conic ($\HH_P$). Thus, the point $Z$ is common to lines $D_1E$ and $D_2F$. So, we have an equality of directed angles (modulo $\pi$), $\ma G_CG_AG_B = \ma (D_2F, D_1E) = \ma FZE$.\\
		
		Now, let $D_0$ be the midpoint of $PH_A$. But $Z$ lies on the nine point circle of $BPC$, which is the circumcircle of $EFD_0$. Hence, we have an equality of angles 
		\begin{equation*}
			\begin{split}
				\ma G_CG_AG_B & = \ma FZE \\
				& = \ma FD_0E \\
				& = \ma CPB \,\text{(scaling at $H_A$ by a factor of $2$)}\\
				& = \ma G_CGG_B,
			\end{split}
		\end{equation*}
		after which the theorem is immediate.
	\end{proof}
	
	\section{A Special Case}
	Let $I$ be the incenter of $ABC$. The orthocenters of triangles $BIC,$ $CIA,$ and $AIB$ are well-studied. We shall study the orthocenters in detail in the coming sections. We make use of the notation from the previous section, and we replace $P,$ $P',$ and $\HH_P$ by $I,$ $I',$ and $\HH_I$.\\
	
	The Nagel point $N_a$ is very relevant in this setting (see \cite{vonk}). In fact, we have the following:
	\begin{prop}
		\label{prop:nagel}
		The common orthocenter of $\tri_A, \tri_B,$ and $\tri_C$ is $N_a$.
	\end{prop}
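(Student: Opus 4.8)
The plan is to identify the common orthocenter $X$ of the triangles $\tri_A, \tri_B, \tri_C$ established in Proposition \ref{prop:common_ortho}, and show that in the special case $P=I$ this point is exactly the Nagel point $N_a$. The cleanest route is to use the characterization of $X$ already developed: from the proof of the Lemma by \textit{enhanced}, $X$ lies on $\HH_I$ with $HX \parallel \tau$ (the tangent to $\HH_I$ at $I$), and $X'$, the antipode of $X$ in $\HH_I$, is the isogonal conjugate of the midpoint $M$ of $QQ^*$ by Proposition \ref{prop:navneel}. So the first thing I would do is specialize $Q$: when $P=I$, the isogonal conjugate $Q$ is $I$ itself, so the whole isogonal-conjugation machinery collapses and the relevant points become very explicit.

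Concretely, I would try to exploit known facts about the incenter configuration. When $P = I$, the hyperbola $\HH_I$ is the \emph{Feuerbach hyperbola} of $ABC$, the unique rectangular circumhyperbola through $I$; its center $Z$ is the Feuerbach point, and it is classically known that the Feuerbach hyperbola also passes through the Nagel point $N_a$ and the orthocenter $H$. This is the key external input. Once $N_a \in \HH_I$ is granted, the strategy is to verify that $N_a$ satisfies the defining perpendicularity relations of the common orthocenter $X$: by Corollary \ref{cor:ortho}, it suffices to show that the second intersection of the perpendicular from $A$ to $H_BH_C$ with $\HH_I$ is $N_a$, or equivalently (since all of $A, H_B, H_C, N_a$ lie on $\HH_I$) that $AN_a \perp H_BH_C$. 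From the proof of Theorem \ref{thm:concyclic}, we already know $H_BH_C$ is perpendicular to $AI$ is \emph{not} quite what we have—rather $GG_A \perp AI$—so I would instead use that $\tri_A = AH_BH_C$ has $H_BH_C$ perpendicular to the direction established there, and show the line $AN_a$ realizes the orthocenter condition.

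The slicker alternative, which I would actually prefer, is to invoke the antipode characterization directly. Since $P=I$ gives $Q=I$, I would compute $X'$ as the isogonal conjugate of the midpoint of $II^*$ (where $I^*$ is the inverse of $I$ in the circumcircle), then show its antipode in the Feuerbach hyperbola is $N_a$. Here the relevant classical fact is that in the Feuerbach hyperbola the antipode of $H$ is the Feuerbach point's diametric partner and, more usefully, that $H$ and $N_a$ are related through the known homothety centered at the centroid. I expect the main obstacle to be pinning down \emph{which} of the several distinguished points on the Feuerbach hyperbola ($H$, $I$, $N_a$, the mittenpunkt, etc.) is the antipode of which, and matching that against the abstract description of $X'$; this is the step where one must be careful not to conflate $X$ with $H$, since both lie on $\HH_I$. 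The safest check, which I would carry out to remove all ambiguity, is to confirm $AN_a \perp H_BH_C$, $BN_a \perp H_CH_A$, and $CN_a \perp H_AH_B$ directly—using the explicit description of the orthocenters $H_A, H_B, H_C$ of $BIC, CIA, AIB$ (for instance via the fact that $H_A$ is the reflection of $I$ over the midpoint of arc-related points, or through barycentric coordinates)—after which Proposition \ref{prop:common_ortho} forces $N_a = X$ and the result follows.
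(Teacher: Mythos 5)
There is a genuine gap. You correctly identify the right reduction — by Proposition \ref{prop:common_ortho} (or the proof of Proposition \ref{prop:orthologic}), the common orthocenter $X$ is characterized by $AX \perp H_BH_C$ and its cyclic analogues, so it suffices to prove $AN_a \perp H_BH_C$ — and this is exactly the reduction the paper makes. But you never actually prove that perpendicularity; you defer it to "a direct check" via some explicit description of $H_A,H_B,H_C$ or via barycentric coordinates, neither of which is carried out. That perpendicularity is the entire content of the paper's argument: it first proves Lemma \ref{lemma:polar_incircle}, that $H_BH_C$ is the polar of the midpoint $A'$ of $BC$ with respect to the incircle (itself a nontrivial projective/inversive argument), and then combines it with the classical facts about the Nagel cevian (the cevian $AN_a$ meets the incircle at the antipode $D_1$ of the touch point $D$ and meets $BC$ at the isotomic point $D_2$ of $D$), so that by the midpoint theorem $AN_a \parallel IA'$, and $IA'$ is perpendicular to the polar of $A'$, i.e.\ to $H_BH_C$. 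Without some substitute for this chain, your proposal stops exactly where the real work begins.

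Your two alternative routes are also not closed. The "slicker" route via Proposition \ref{prop:navneel} (computing $X'$ as the isogonal conjugate of the midpoint of $II^*$ and then taking its antipode in the Feuerbach hyperbola) requires identifying that antipode with $N_a$, and you explicitly acknowledge you do not know how to pin this down; the hinted description of $H_A$ as "the reflection of $I$ over the midpoint of arc-related points" is not correct as stated (one has $H_A = I + 2(M_{BC} - A_0)$, where $A_0$ is the arc midpoint and $M_{BC}$ the midpoint of $BC$, which is a translation, not a reflection about a single named point). The external input that $N_a$ lies on the Feuerbach hyperbola $\HH_I$ is fine and would indeed let you conclude from a single perpendicularity $AN_a\perp H_BH_C$ via Corollary \ref{cor:ortho}, but again only once that perpendicularity is established.
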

	\begin{figure}[h]
		\includegraphics[scale=0.8]{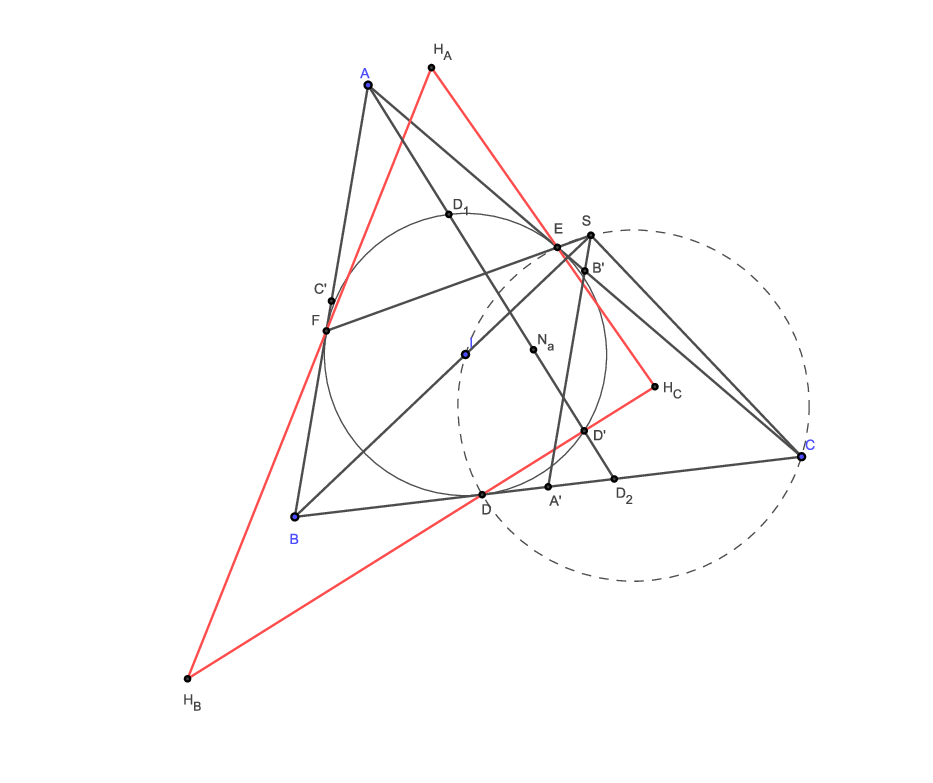}
		\caption{Nagel Point is the common Orthocenter}
	\end{figure}
	In view of the proof of Proposition~\ref{prop:orthologic}, it suffices to show that $AN_a$ is perpendicular to $H_BH_C$. We require a well-known Lemma to show this.
	\begin{lemma}
		\label{lemma:polar_incircle}
		The polar of the midpoint of $BC$ with respect to the incircle of $ABC$ is $H_BH_C$. 
	\end{lemma}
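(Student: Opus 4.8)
The plan is to place the incenter $I$ at the origin and work analytically with the polar line. Writing $\vec a=\vec{IA}$, $\vec b=\vec{IB}$, $\vec c=\vec{IC}$, a point $N$ lies on the polar of $M_A$ with respect to the incircle (centre $I$, radius $r$) exactly when $\vec{IN}\cdot\vec{IM_A}=r^2$. Since the midpoint $M_A$ of $BC$ gives $\vec{IM_A}=\tfrac12(\vec b+\vec c)$, it suffices to verify this single scalar identity for both $H_B$ and $H_C$; two distinct points then pin down the polar as the line $H_BH_C$.

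First I would record the orthocenter conditions. Because $H_C$ is the orthocenter of $AIB$ with $I$ at the origin, the altitudes through $A$ and $B$ give $\vec{IH_C}\cdot\vec a=\vec{IH_C}\cdot\vec b=\vec a\cdot\vec b$. Writing $\vec{IH_C}=\alpha\vec a+\beta\vec b$ and solving the resulting $2\times2$ linear system expresses $\alpha,\beta$ through the Gram data $|\vec a|^2,|\vec b|^2,\vec a\cdot\vec b$. I would then compute $\vec c\cdot\vec{IH_C}=\alpha(\vec a\cdot\vec c)+\beta(\vec b\cdot\vec c)$, so that $\vec{IM_A}\cdot\vec{IH_C}=\tfrac12(\vec a\cdot\vec b+\vec c\cdot\vec{IH_C})$ becomes an explicit expression in the six inner products.

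The second step is to substitute the incenter's metric data: $IA=r/\sin(A/2)$, $IB=r/\sin(B/2)$, $IC=r/\sin(C/2)$, together with $\angle AIB=\tfrac{\pi}{2}+\tfrac{C}{2}$ and its cyclic analogues, so that every inner product is $r^2$ times a ratio of half-angle sines and cosines. After using $\sin(A/2)+\sin(B/2)\sin(C/2)=\cos(B/2)\cos(C/2)$ (a restatement of $A/2=\tfrac{\pi}{2}-B/2-C/2$) to clear the coefficients, the expression collapses to $\dfrac{r^2\,(\sin A+\sin B-\sin C)}{4\sin(A/2)\sin(B/2)\cos(C/2)}$. The crux is the identity $\sin A+\sin B-\sin C=4\sin(A/2)\sin(B/2)\cos(C/2)$, which makes this equal to exactly $r^2$. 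By the symmetry $B\leftrightarrow C$, which fixes $M_A$ and interchanges $H_C$ with $H_B$, the same identity yields $\vec{IM_A}\cdot\vec{IH_B}=r^2$, and the two distinct points $H_B,H_C$ determine the polar of $M_A$.

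I expect the main obstacle to be purely computational: carrying the six inner products through the linear solve without sign errors and recognising that the final trigonometric sum is precisely the one governed by the identity above. A lighter entry point worth keeping in reserve is that, since $M_A$ lies on $BC$, which is the tangent to the incircle at the contact point $X$, La Hire's theorem already forces the polar of $M_A$ to pass through $X$; this reduces the task to locating one further point of the polar and serves as a check that $X,H_B,H_C$ are collinear.
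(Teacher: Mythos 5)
Your computation is correct and the approach goes through: with $I$ at the origin the polar of $M_A$ is indeed $\{\vec n:\vec n\cdot\tfrac12(\vec b+\vec c)=r^2\}$, the orthocenter conditions $\vec{IH_C}\cdot\vec a=\vec{IH_C}\cdot\vec b=\vec a\cdot\vec b$ are right, and carrying the Gram data $IA=r/\sin(A/2)$, $\angle AIB=\tfrac{\pi}{2}+\tfrac{C}{2}$ through the linear solve does collapse, via $\sin(A/2)+\sin(B/2)\sin(C/2)=\cos(B/2)\cos(C/2)$, to $\dfrac{r^2(\sin A+\sin B-\sin C)}{4\sin(A/2)\sin(B/2)\cos(C/2)}=r^2$; the $B\leftrightarrow C$ symmetry then handles $H_B$. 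This is, however, a genuinely different route from the paper's. The paper first dualizes: by La Hire it suffices to show that the polar of $H_A$ is the midline $B'C'$, and this is proved synthetically using two classical contact-triangle facts (that $BI\cap B'C'$ lies on the line $DE$ joining touch points, and that $S,I,E,D,C$ are concyclic), an orthogonality of circles to produce an inverse pair, and La Hire once more. Your version buys self-containedness — nothing is imported beyond standard incenter metric data and one trigonometric identity, and every step is mechanically checkable — at the cost of a computation that must be carried through without sign slips. The paper's version is computation-free and exposes the structurally cleaner dual statement (polar of $H_A$ equals $B'C'$), which is what it actually applies cyclically, but it leans on lemmas quoted as ``well-known.'' Your reserve observation that the polar of $M_A$ must pass through the contact point $D$ is also sound and consistent with the paper's later use of the fact that $D,D'$ lie on $H_BH_C$; it is a good sanity check, though by itself it supplies only one point of the line.
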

	\begin{proof}
		Let $A',B',$ and $C'$ denote the midpoints of $BC,CA,$ and $AB$. Let $D,E,$ and $F$ denote the points where the incircle touches $BC,CA,$ and $AB$. It suffices to show that $B'C'$ is the polar of $H_A$, because of La Hire's Theorem.\\
		
		Let $BI$ meet $B'C'$ at $T$ and $A'B'$ at $S$. It is well-known that $T$ lies on $DE$ (see \cite{zhao}, \textbf{Lemma 8}). $B'T \parallel BC$ and $CE=CD$ imply that $B'E=B'T$. Similarly $B'E=B'S$ and hence $B'$ is the circumcenter of $ETS$. Since $IE \perp B'E$, the incircle and the circumcircle of $ETS$ are orthogonal. Thus, $T$ and $S$ are inverses of each other in the incircle. It is also well-known that $S,I,E,D,C$ are concyclic (see \cite{zhao}, \textbf{Lemma 8}), and hence $IS \perp CS$. \\
		
		This means that $CS$ is the polar of $T$ in the incircle, and $H_A$ lies on $CS$. By La Hire's theorem, polar of $H_A$ passes through $T=BI \cap B'C'$. A similar argument shows that the polar also passes through $CI \cap B'C'$. Therefore the polar is $B'C'$.
	\end{proof}
	
	\begin{proof} (Of Proposition \ref{prop:nagel})
		Let the other tangent from $A'$ intersect the incircle at $D'$. By Lemma~\ref{lemma:basic2}, the points $D,D'$ lie on $H_BH_C$. Let $AN_a$ intersect the incircle first at $D_1$ and let $D_2$ be the intersection of $AN_a$ with $BC$. It is well-known that $D_1$ is the antipode of $D$, and that $D_2$ and $D$ are isotomic in $BC$ (see \cite{zhao}, \textbf{Lemma 2}). Hence, by midpoint theorem, $IA' \parallel D_1D_2$. But, $IA' \perp DD'$ since $DD'$ is the polar of $A'$. Hence we obtain the perpendicularity $$AN_a \perp DD'=H_BH_C.$$ 
	\end{proof}
	
	\subsection{The Fuhrmann Triangle}
	We present a few results about the Fuhrmann triangle (see \cite{wolfram}). Let $\triangle_a$, $\triangle_b$, and $\triangle_c$ denote the triangles $H_ABC$,$H_BCA$ and $H_CBA$ respectively. 
	\begin{definition}
		\label{def:fuhrmann}
		The triangle formed by circumcenters $O_a,O_b,$ and $O_c$ of $\triangle_a$, $\triangle_b$, and $\triangle_c$ is called the Fuhrmann Triangle $\triangle_f$ of $ABC$. The circumcircle $\Omega_f$ of $\triangle_f$ is called the Fuhrmann Circle of $ABC$.
	\end{definition}
	Let $\Omega$ be the circumcircle of $ABC$, and let $AI, BI, CI$ meet $\Omega$ again at $A_0, B_0, C_0$ respectively. It is easy to see that $A_0$ is the circumcenter of $BIC$. Hence, $O_a$ is the reflection of $A_0$ in $BC$. Similarly, $O_b$ and $O_c$ are reflections of $B_0$ and $C_0$ in $AC$ and $AB$ respectively. Hence, $\Omega_f$ is the Hagge circle (\cite{telv}) of $I$ with respect to $ABC$. All of the following four propositions follow from the properties of the Hagge circle proved in \cite{telv}. However, we shall provide alternative proofs for some of them (due to AoPS user \textit{enhanced}).
	
	\begin{prop}
		\label{prop:ortho_on_fuhrmann}
		The orthocenter $H$ lies on $\Omega_f$.
	\end{prop}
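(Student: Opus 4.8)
The plan is to give a short complex-coordinate proof, placing the circumcircle $\Omega$ as the unit circle. I would use the square-root parametrization $A=x^2$, $B=y^2$, $C=z^2$ with $|x|=|y|=|z|=1$, chosen in the standard incenter configuration so that the arc midpoints are $A_0=-yz$, $B_0=-zx$, $C_0=-xy$ (as sanity checks, $I=-(xy+yz+zx)$ and $H=x^2+y^2+z^2$). Since the reflection of a point $p$ across the chord joining $u,v$ on the unit circle is $u+v-uv\bar p$, reflecting $A_0=-yz$ across $BC$ (using the description of $O_a$ already recorded before this proposition) yields the clean expression $O_a=y^2+z^2+yz$, and cyclically $O_b=z^2+x^2+zx$ and $O_c=x^2+y^2+xy$.

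With these formulas the claim reduces to checking that $H,O_a,O_b,O_c$ are concyclic, which I would do by verifying that the cross ratio $(H,O_a;O_b,O_c)$ is real. The relevant differences collapse pleasantly: $H-O_a=x^2-yz$ (and cyclically), while $O_a-O_b=(y-x)(x+y+z)$ and $O_a-O_c=(z-x)(x+y+z)$. The common factor $x+y+z$ cancels, leaving
\[
(H,O_a;O_b,O_c)=\frac{(H-O_b)(O_a-O_c)}{(H-O_c)(O_a-O_b)}=\frac{(y^2-zx)(z-x)}{(z^2-xy)(y-x)}.
\]
Conjugating each factor through $\bar x=1/x$, $\bar y=1/y$, $\bar z=1/z$ multiplies numerator and denominator by the same monomial $x^2y^2z^2$ (the sign flips pair off), so the expression equals its own conjugate; hence it is real and the four points lie on a circle.

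The only genuine work is the algebraic bookkeeping: producing the compact forms $O_a=y^2+z^2+yz$ and $H-O_a=x^2-yz$, and spotting the shared factor $x+y+z$ in the differences $O_a-O_b$ and $O_a-O_c$; after that the reality test is a one-line conjugation. I expect the main obstacle to be merely organizational — keeping the cyclic roles of $x,y,z$ consistent while reflecting over the three different sides — rather than anything conceptual. As an alternative I would record the purely synthetic route: each $O_a$ is the reflection of $A_0$ in $BC$, the reflections of $H$ in the three sides lie on $\Omega$, and a directed-angle chase comparing the angles subtended by the chords $O_bO_c$ at $H$ and at $O_a$ gives the concyclicity; this is conceptually cleaner but heavier in casework precisely because the three reflections are taken over three distinct lines, whereas the complex-number computation sidesteps that difficulty entirely.
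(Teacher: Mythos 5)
Your proof is correct, and it takes a genuinely different route from the paper. The paper inverts: it applies a negative inversion $\Phi$ centered at $H$ swapping the altitude feet with the vertices, identifies $\Phi(O_a)$ as the point where the bisector $HO_a$ of $\angle BHC$ meets $EF$, computes the three signed ratios via the angle bisector theorem, and concludes by Menelaus that $\Phi(O_a),\Phi(O_b),\Phi(O_c)$ are collinear — equivalently, that the circle $O_aO_bO_c$ passes through the center $H$ of inversion. Your computation with $A=x^2, B=y^2, C=z^2$ on the unit circle is sound: $O_a = y^2+z^2+yz$ is the correct reflection of $A_0=-yz$ in $BC$, the factorizations $H-O_a=x^2-yz$ and $O_a-O_b=(y-x)(x+y+z)$ check out, and the conjugation argument does show the cross ratio is self-conjugate (the monomials $xy^2z\cdot xz$ and $xyz^2\cdot xy$ both equal $x^2y^2z^2$, and the four sign flips cancel in pairs). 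The only point worth making explicit is that a real cross ratio gives \emph{concyclic or collinear}, so you should note that $O_a,O_b,O_c$ are not collinear for a nondegenerate triangle before concluding $H\in\Omega_f$. As for what each approach buys: the paper's inversion argument exposes the mechanism (circles through the center of inversion become lines, and the collinearity is a clean Menelaus identity built from the bisected angles at $H$), and its intermediate facts — such as $HO_a$ bisecting $\angle BHC$ — are reused in spirit later; your computation is shorter, requires no auxiliary construction, and the compact forms $O_a=y^2+z^2+yz$, $H-O_a=x^2-yz$ would also be convenient raw material for the later propositions about $O_f$, $N_a$, and the Anti-Fuhrmann triangle if one wanted a uniformly computational treatment.
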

	\begin{proof}
		We shall not retain the names of points introduced in this proof. Let $D, E,$ and $F$ denote the feet of the altitudes from $A,B,$ and $C$ in $ABC$. Let $\Phi$ denote the negative inversion centered at $H$ swapping the pairs $(A,D), (B,E),$ and $(C,F)$. \\
		
		The circumcircle of $BHC$ is the reflection of $\Omega$ about $BC$. Since the midpoint of arc $BC$ reflects about to $O_a$, it follows that $HO_a$ is bisects $\angle BHC$. Under $\Phi$, the circumcircle of $BHC$ maps to line $EF$. Consequently, $\Phi(O_a)$ is the intersection of $HO_a$ with $EF$. \\
		
		From the angle bisector theorem, we have $$\frac{\overline {E\Phi(O_a)}}{\overline {F\Phi(O_a)}} = -\frac{EH}{FH}.$$ Similarly, one gets $$\frac{\overline {F\Phi(O_b)}}{\overline {D\Phi(O_b)}} = -\frac{FH}{DH},$$ and $$\frac{\overline {D\Phi(O_c)}}{\overline {E\Phi(O_c)}} = -\frac{DH}{EF}.$$ From Menelaus' theorem, we conclude that the images of $O_a, O_b, O_c$ under $\Phi$ are collinear. Therefore, $H$ lies on the circumcircle $\Omega_f$ of $O_aO_bO_c$.
	\end{proof}
	
	\begin{prop}
		\label{prop:fuhrmann_center}
		The center $O_f$ of $\Omega_f$ is the reflection of $I$ in the nine point center $N$.
	\end{prop}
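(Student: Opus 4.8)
The plan is to work with position vectors taking the circumcenter $O$ as the origin, so that $H = A + B + C$ and the nine-point center $N = \tfrac12(A+B+C)$ is the midpoint of $OH$. The reflection of $I$ in $N$ is then the point $O_f' := 2N - I = H - I = A + B + C - I$, and I would show directly that $O_f'$ is equidistant from $O_a, O_b, O_c$, which forces it to be the circumcenter $O_f$ of the Fuhrmann triangle.

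The first step is to find a clean expression for $O_a$. Since $A_0$ is the midpoint of arc $BC$, it lies on the perpendicular bisector of $BC$, so $O$, the midpoint $M_A = \tfrac12(B+C)$ of $BC$, and $A_0$ are collinear with $OA_0 \perp BC$. Reflecting $A_0$ in the line $BC$ therefore simply sends it to $2M_A - A_0$, giving $O_a = B + C - A_0$, and cyclically $O_b = C + A - B_0$, $O_c = A + B - C_0$. Consequently $O_f' - O_a = A + A_0 - I$, and the whole proposition reduces to the single claim $|A + A_0 - I| = OI$ (together with its two cyclic analogues), since moreover $|O_f' - H| = |{-I}| = OI$.

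The heart of the argument is this claim, which I would establish as follows. Writing $K = \tfrac12(A + A_0)$ for the midpoint of the chord $AA_0$, the identity $|A + A_0 - I|^2 = |I|^2$ is equivalent, after expanding $|2K - I|^2 = |I|^2$, to $K \cdot (K - I) = 0$, i.e. to the perpendicularity $OK \perp IK$. But $K$ is the midpoint of a chord of $\Omega$, so $OK \perp AA_0$; and because $A_0 = AI \cap \Omega$, the point $I$ lies on the line $AA_0$, so the segment $IK$ is directed along $AA_0$. Hence $OK \perp IK$, which proves the claim. The cyclic versions follow verbatim, using that $I$ lies on $BB_0$ and on $CC_0$ as well.

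I expect the only genuine obstacle to be spotting the reduction $O_a = B + C - A_0$ and recognizing that the resulting condition says precisely that the midpoint of the chord $AA_0$ is the foot of the perpendicular from $O$ to the angle bisector $AI$; once this is seen, the computation is immediate and fully symmetric in the three vertices. A by-product of the argument is that the common distance equals $OI$, recovering the known fact that the Fuhrmann circle has radius $OI$ and passes through $H$, in agreement with Proposition~\ref{prop:ortho_on_fuhrmann}.
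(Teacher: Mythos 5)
Your proof is correct, and it takes a genuinely different route from the paper's. The paper argues synthetically: it introduces the antipode $A^*$ of $A$, observes that $HO_aA^*A_0$ is a parallelogram to get $AI \perp HO_a$, and then shows that the reflection of $I$ in $N$ lies on the perpendicular bisector of each segment $HO_a$, $HO_b$, $HO_c$ (using that this bisector passes through the reflection of $O$ in $BC$). You instead vectorize from $O$, reduce everything to the identity $O_a = B + C - A_0$, and then to the single clean fact that the midpoint $K$ of the chord $AA_0$ satisfies $OK \perp IK$ because $I$ lies on that chord. Your computation is shorter and fully symmetric, and it yields more: the common distance is $OI$, so you recover the radius of the Fuhrmann circle and reprove Proposition~\ref{prop:ortho_on_fuhrmann} ($H \in \Omega_f$) as a by-product, neither of which the paper's argument gives directly. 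The one thing the paper's proof buys that yours does not is the intermediate perpendicularity $HO_a \perp AI$, which is cited again later in the proof of Theorem~\ref{thm:anti_fuhrmann}; if your proof replaced the paper's, that perpendicularity would need to be extracted separately (it does follow easily from your setup, since $H - O_a = A + A_0$ is the direction $OK$, which is perpendicular to $AA_0 = AI$). A minor implicit point, shared with the paper: identifying the equidistant point with the circumcenter assumes $O_a, O_b, O_c$ are not collinear, which fails only in the degenerate equilateral case.
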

	\begin{proof}
		Once again, we shall discard the object names introduced in the proof. Let $A^*$ denote the antipode of $A$ in $\Omega$. The segments $HA^*, O_aA_0,$ and $BC$ have the same midpoint, so in particular, $HO_aA^*A_0$ is a parallelogram. It follows from $AA_0 \perp A_0A^*$ that $AI \perp HO_a$. \\
		
		The reflections of $H, O_a$ about $BC$ lie on $\Omega$. Hence, the perpendicular bisector $\ell_1$ of $HO_a$ passes through the reflection $O_1$ of $O$ about $BC$. But, $AHO_1O$ is a parallelogram, so $N$ is the midpoint of $AO_1$. Both $AI$ and $\ell_1$ are perpendicular to $HO_a$, so the reflection of $I$ about $N$ lies on $\ell_1$. Similarly, the reflection lies on the perpendicular bisectors of $HO_b$ and $HO_c$, from which it is immediate that this reflection must be $O_f$.
	\end{proof}

	\begin{prop}
		\label{prop:antipode}
		$N_a$ is the antipode of $H$ in $\Omega_f$.
	\end{prop}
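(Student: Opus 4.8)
The plan is to deduce the statement directly from the two preceding propositions together with the classical description of the Nagel line. Since Proposition~\ref{prop:ortho_on_fuhrmann} already places $H$ on $\Omega_f$, and $O_f$ is the center of $\Omega_f$, the point $N_a$ will be the antipode of $H$ precisely when $O_f$ is the midpoint of the segment $HN_a$. Thus I would reduce the whole claim to verifying this single midpoint identity, feeding into it the description of $O_f$ furnished by Proposition~\ref{prop:fuhrmann_center}, namely that $O_f$ is the reflection of $I$ in the nine point center $N$.

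For the midpoint identity I would pass to position vectors, placing the circumcenter $O$ at the origin. Then $H = A + B + C$, so $G = \tfrac13(A+B+C) = \tfrac13 H$ and $N = \tfrac12 H$. The incenter is $I = \tfrac{aA+bB+cC}{a+b+c}$, and the classical fact that $I$, $G$, $N_a$ are collinear with $\overrightarrow{IG}:\overrightarrow{GN_a} = 1:2$ gives $N_a = 3G - 2I = H - 2I$. Consequently the midpoint of $HN_a$ is $\tfrac12(H + N_a) = H - I$, while Proposition~\ref{prop:fuhrmann_center} yields $O_f = 2N - I = H - I$. The two points coincide, which is exactly what is needed.

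To conclude, since $O_f$ is the midpoint of $HN_a$ and $H \in \Omega_f$, the reflection $N_a$ of $H$ in the center $O_f$ also lies on $\Omega_f$ and is its antipode of $H$. I expect the computation itself to be essentially bookkeeping; the only genuine input beyond the established propositions is the Nagel line relation $N_a = 3G - 2I$, so the main point is to recall and invoke it correctly (equivalently, to recall the Euler line position $3G = H + 2O$ together with the fact that $N$ bisects $OH$). A coordinate free phrasing is also available: because $N$ bisects both $OH$ and $IO_f$, the quadrilateral $OIHO_f$ is a parallelogram, whence $\overrightarrow{HO_f} = \overrightarrow{IO}$; reflecting $H$ in $O_f$ then lands on $H + 2\overrightarrow{IO} = 3G - 2I = N_a$, recovering the same conclusion without coordinates.
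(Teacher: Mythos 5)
Your proof is correct, and it rests on exactly the same ingredients as the paper's: Proposition~\ref{prop:ortho_on_fuhrmann} to put $H$ on $\Omega_f$, Proposition~\ref{prop:fuhrmann_center} to locate $O_f$ as $2N-I$, and the Nagel line ratio $\overline{IG}:\overline{GN_a}=1:2$. The difference is purely in execution. The paper first establishes that $H$, $N_a$, $O_f$ are collinear by showing $HN_a$ and $HO_f$ are both parallel to $OI$ (each via a midpoint/ratio argument), and then extracts the ratio $\overline{HN_a}=-2\,\overline{O_fH}$ by applying Menelaus to triangle $IN_aO_f$ with transversal $GNH$. You instead verify the single identity ``$O_f$ is the midpoint of $HN_a$'' by a direct vector computation with $O$ at the origin ($N_a=H-2I$ and $O_f=2N-I=H-I$), which collapses the collinearity and the ratio into one line of bookkeeping and is arguably cleaner. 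Your coordinate-free variant --- $N$ bisects both $OH$ and $IO_f$, so $OIHO_f$ is a parallelogram and $\overrightarrow{HO_f}=\overrightarrow{IO}$ --- is essentially the paper's parallelism observation made explicit, so the two arguments are really the same geometry packaged differently: the paper buys a fully synthetic presentation at the cost of invoking Menelaus, while you buy brevity at the cost of a choice of origin.
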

	\begin{proof}
		From the well-known property of the Nagel line, we have $$\frac{\overline{IG}}{\overline{GN_a}} = \frac{\overline{OG}}{\overline{GH}} = \frac12.$$ From Proposition \ref{prop:fuhrmann_center}, we have the relation $$\frac{\overline{IN}}{\overline{NO_f}} = \frac{\overline{ON}}{\overline{NH}} = 1.$$ Consequently, the lines $HN_a$ and $HO_f$ are parallel to $OI$, which implies that $H, N_a,$ and $O_f$ are collinear. \\
		
		Applying Menelaus' theorem to $IN_aO_f$ with the transversal $GNH$, we find that $$\frac{\overline{O_fH}}{\overline{HN_a}} \cdot  \frac{\overline{IG}}{\overline{GN_a}} \cdot \frac{\overline{IN}}{\overline{NO_f}} = -1.$$ It follows that $\overline{HN_a} = -2 \overline{O_fH}$, so that $N_a$ and $H$ are antipodes in $\Omega_f$.
	\end{proof}
	\begin{theorem}[Stevanovik]
		The orthocenter of $\triangle_f$ is $I$.
	\end{theorem}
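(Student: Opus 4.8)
The plan is to compute everywhere with position vectors based at the circumcenter $O$ of $ABC$, which I place at the origin. With this normalization the orthocenter of $ABC$ is $H=A+B+C$ and the nine-point center is $N=\tfrac12(A+B+C)=\tfrac12H$. First I would turn each Fuhrmann vertex into a formula. Recall that $O_a$ is the reflection of $A_0$ in line $BC$ (established just before Proposition~\ref{prop:ortho_on_fuhrmann}). Since $A_0$ is the second intersection of $AI$ with $\Omega$, it is the arc midpoint of $BC$ and hence lies on the perpendicular bisector of $BC$; consequently the foot of the perpendicular from $A_0$ to line $BC$ is the midpoint $M_A=\tfrac12(B+C)$, and reflecting gives
\[
O_a = 2M_A - A_0 = B + C - A_0 .
\]
Treating the other two vertices symmetrically yields $O_b = C+A-B_0$ and $O_c = A+B-C_0$, so that
\[
O_a+O_b+O_c = 2(A+B+C) - (A_0+B_0+C_0) = 2H - (A_0+B_0+C_0).
\]

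The decisive step is to identify $A_0+B_0+C_0$, and for this I would prove the classical fact that $I$ is the orthocenter of the arc-midpoint triangle $A_0B_0C_0$. By the incenter--excenter lemma, $C_0$ (the midpoint of arc $AB$) satisfies $C_0A=C_0I$ and $B_0$ (the midpoint of arc $CA$) satisfies $B_0A=B_0I$; hence both $B_0$ and $C_0$ lie on the perpendicular bisector of $AI$, so $B_0C_0\perp AI$. Because $A$, $I$, $A_0$ are collinear on the internal bisector from $A$, the line $A_0I$ coincides with $AI$ and is therefore the altitude of $A_0B_0C_0$ from $A_0$. The same reasoning at the other two vertices shows that the three altitudes of $A_0B_0C_0$ are exactly the internal bisectors $AA_0$, $BB_0$, $CC_0$ of $ABC$, which concur at $I$. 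Since $A_0,B_0,C_0$ lie on $\Omega$, a circle centered at the origin, the orthocenter of $A_0B_0C_0$ equals $A_0+B_0+C_0$; therefore $A_0+B_0+C_0=I$.

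It then remains to assemble the pieces. Substituting into the second display gives $O_a+O_b+O_c = 2H-I$. For any triangle with circumcenter $O_f$ the orthocenter equals (sum of the vertices) $-\,2O_f$, a direct consequence of $\overrightarrow{O_fH'}=\overrightarrow{O_fO_a}+\overrightarrow{O_fO_b}+\overrightarrow{O_fO_c}$. Applying this to $\triangle_f=O_aO_bO_c$ and using Proposition~\ref{prop:fuhrmann_center}, which gives $O_f=2N-I=H-I$, the orthocenter of $\triangle_f$ equals
\[
(O_a+O_b+O_c)-2O_f = (2H-I)-2(H-I) = I,
\]
as claimed. I expect the only genuine obstacle to be the arc-midpoint lemma of the middle paragraph, which relies on the incenter--excenter lemma; everything else is a formal consequence of placing $O$ at the origin together with the Euler and nine-point relations and Proposition~\ref{prop:fuhrmann_center} already in hand.
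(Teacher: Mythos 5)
Your proof is correct, and every step checks out: the reflection formula $O_a=B+C-A_0$ is valid because $A_0$ lies on the perpendicular bisector of $BC$; the identification $A_0+B_0+C_0=I$ follows from the incenter--excenter lemma exactly as you argue; the identity ``orthocenter $=$ (sum of vertices) $-\,2\times$(circumcenter)'' is standard; and Proposition~\ref{prop:fuhrmann_center} is established in the paper before this theorem, so invoking $O_f=H-I$ is legitimate. Your route is, however, genuinely different from the paper's. The paper's proof is a one-line deduction: it observes that $I$ is the orthocenter of the arc-midpoint triangle $A_0B_0C_0$ and then cites Property 3 of the Hagge-circle reference, which asserts in general that the orthocenter of the Hagge triangle of a point $P$ is the orthocenter of the circumcevian triangle of $P$ reflected appropriately --- the whole burden is carried by the external result. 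You instead give a self-contained vector computation with $O$ at the origin, using the same key fact about $A_0B_0C_0$ but replacing the citation with the explicit formulas $O_a+O_b+O_c=2H-I$ and $O_f=H-I$. What the paper's approach buys is brevity and the placement of this theorem inside the general Hagge-circle framework that it uses for the surrounding propositions; what yours buys is a complete, elementary, verifiable argument that a reader can check without consulting the reference, at the modest cost of depending on Proposition~\ref{prop:fuhrmann_center} for the location of $O_f$.
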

	\begin{proof}
		This follows from Property 3 of \cite{telv}, after noticing that $I$ is the orthocenter of $A_0B_0C_0$.
	\end{proof}

	\begin{prop}
		The Euler reflection point of $\tri_f$ is $H$.
	\end{prop}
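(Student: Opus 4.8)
The plan is to use the classical description of the Euler reflection point of a triangle as the common point of the reflections of its Euler line in the three sidelines (this point is known to lie on the circumcircle), and to pin down this point for $\triangle_f$ from the data already assembled. By Stevanovik's theorem the orthocenter of $\triangle_f$ is $I$, and by Proposition~\ref{prop:fuhrmann_center} its circumcenter is $O_f$; hence the Euler line of $\triangle_f$ is the line $IO_f$ (which also passes through $N$). Since $H\in\Omega_f$ by Proposition~\ref{prop:ortho_on_fuhrmann}, it remains only to show that $H$ is the point at which the three reflected copies of $IO_f$ meet.

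The first step is an involution observation: reflection in a side is an involution, so $H$ lies on the reflection of the line $IO_f$ in a side $s$ of $\triangle_f$ if and only if the reflection of $H$ in $s$ lies on $IO_f$. Therefore it suffices to prove that the three reflections of $H$ in the sides $O_bO_c,\ O_cO_a,\ O_aO_b$ all lie on $IO_f$. These three reflections are automatically collinear, since $H\in\Omega_f$ they form the Steiner line of $H$ with respect to $\triangle_f$, and that line passes through the orthocenter $I$ of $\triangle_f$, which already lies on $IO_f$. Consequently it is enough to check that one of these reflections, say that of $H$ in $O_bO_c$, lands on $IO_f$: the Steiner line is then forced to coincide with $IO_f$, all three reflected Euler lines pass through $H$, and $H$ is the Euler reflection point.

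To carry out this single verification I would pass to complex coordinates with the circumcircle of $ABC$ as the unit circle, writing $A=a^2$, $B=b^2$, $C=c^2$ in the usual incircle normalization, so that $I=-(ab+bc+ca)$ and $H=a^2+b^2+c^2$. Reflecting the arc midpoints $-bc,-ca,-ab$ in the corresponding sides (using $z\mapsto u+v-uv\overline z$ for the chord through $u,v$ on the unit circle) yields the clean vertices $O_a=b^2+c^2+bc$, $O_b=c^2+a^2+ca$, $O_c=a^2+b^2+ab$, whence $O_f=(a+b+c)^2-(ab+bc+ca)$ with $O_f-O_a=a(a+b+c)$ and cyclically; in particular $\Omega_f$ has radius $|a+b+c|$ and the Euler line $IO_f$ has direction $(a+b+c)^2$. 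Reflecting $H$ in the chord $O_bO_c$ and checking that the image $H^{*}$ satisfies $(H^{*}-O_f)/(a+b+c)^2\in\R$, i.e. that $H^{*}$ lies on $IO_f$, finishes the argument.

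The main obstacle is obtaining the Fuhrmann vertices in closed form and performing the reflection; once the expressions $O_a=b^2+c^2+bc$ (and cyclic) are in hand, the problem collapses to verifying that a single quantity is real, which the symmetry $a\leftrightarrow 1/a$ of the unit circle makes routine. A purely synthetic route is available but harder: it amounts to proving directly that the Simson line of $H$ with respect to $\triangle_f$ is parallel to $IO_f$, equivalently, using that $N_a$ is the antipode of $H$ (Proposition~\ref{prop:antipode}), that the Simson line of $N_a$ is perpendicular to $IO_f$. I expect that direction statement to be the genuinely delicate step if one insists on avoiding coordinates.
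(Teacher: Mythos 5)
Your proof is correct, but it is a genuinely different route from the paper's. The paper disposes of this proposition in one line: having already identified $\Omega_f$ as the Hagge circle of $I$ with respect to $ABC$, it simply invokes Corollary 4.1 of \cite{telv}, so the statement is inherited from the general theory of Hagge circles. You instead give a self-contained argument: using the fact that reflection in a sideline is an involution, you convert ``$H$ lies on all three reflected Euler lines'' into ``the Steiner line of $H$ with respect to $\tri_f$ is the Euler line $IO_f$,'' and since the Steiner line automatically passes through the orthocenter $I$ of $\tri_f$, only one reflection needs to be verified. That verification does go through: with $O_a=b^2+c^2+bc$ and cyclic permutations one gets $O_f=a^2+b^2+c^2+ab+bc+ca$, and the normalized image of $H$ reflected in $O_bO_c$ is $\frac{a}{a+b+c}+\frac{bc}{ab+bc+ca}-1$, which is fixed by conjugation (the two nonconstant terms swap), hence real, hence the image lies on $IO_f$. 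What your approach buys is independence from the Hagge-circle machinery together with an explicit and reusable parametrization of the Fuhrmann vertices; what it costs is a coordinate computation where the paper has a citation. Two small points to make explicit if you write this up: to conclude that the Steiner line equals $IO_f$ from a single verified reflection you should note that this reflection differs from $I$ (your formulas show $w_{H}^{*}=-b-c+\frac{bc(a+b+c)}{ab+bc+ca}$ while $w_I=-(a+b+c)$, which coincide only under a degenerate algebraic condition), and to conclude that $H$ is \emph{the} Euler reflection point you should observe that the three reflected Euler lines are distinct, so they meet in at most one point.
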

	\begin{proof}
		This is a direct consequence of Corollary 4.1 of \cite{telv}.  
	\end{proof}

	\section{The Anti-Fuhrmann Triangle}
	In this section, we introduce a new triangle, which bears very similar structure to the Fuhrmann triangle.
	\begin{definition}
		We call the triangle formed by circumcenters of $\triangle_A$, $\triangle_B$, and $\triangle_C$ the Anti-Fuhrmann Triangle $\triangle_r$ of $ABC$. The circumcircle $\Omega_r$ of $\triangle_r$ is called the Anti-Fuhrmann Circle of $ABC$.
	\end{definition}
	\begin{figure}
		\includegraphics[scale=0.8]{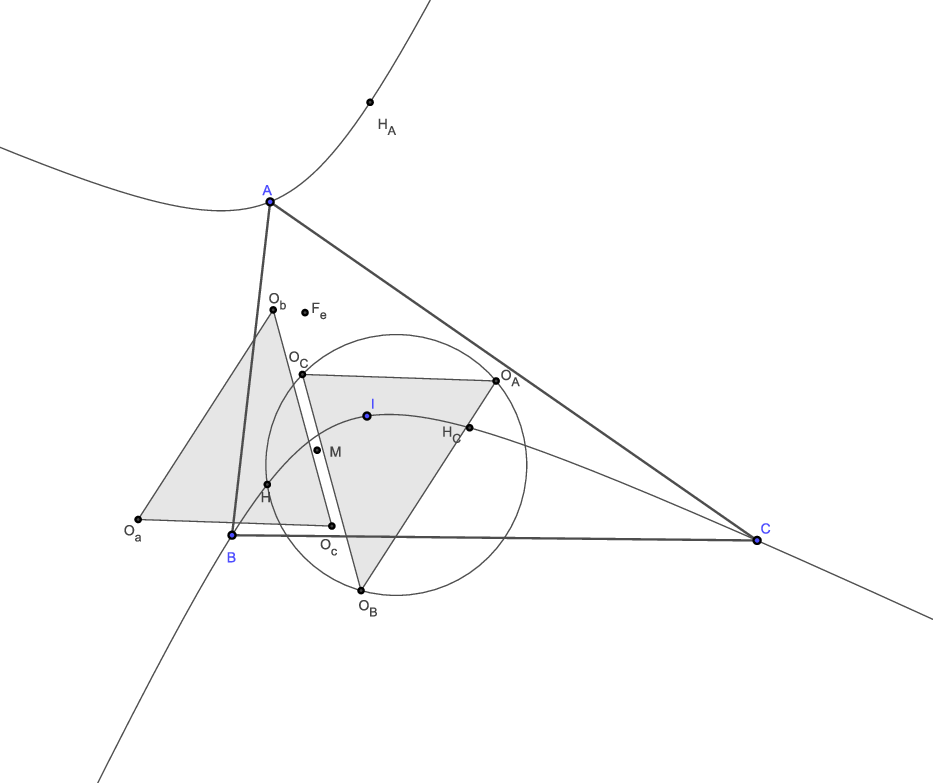}
		\caption{Fuhrmann and Anti-Fuhrmann Triangle}
	\end{figure}
	\begin{theorem}
		\label{thm:anti_fuhrmann}
		The Anti-Fuhrmann triangle $\tri_r$ is the reflection of the Fuhrmann triangle $\tri_f$ about the midpoint of $HI$.
	\end{theorem}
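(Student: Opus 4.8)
The plan is to pass to position vectors, placing the circumcenter $O$ of $ABC$ at the origin, and to prove the statement vertex-by-vertex. Writing $R_A, R_B, R_C$ for the circumcenters of $\tri_A, \tri_B, \tri_C$ (the vertices of $\tri_r$), the assertion that $\tri_r$ is the reflection of $\tri_f$ about the midpoint $M = \tfrac12(H+I)$ amounts to the three relations $R_A = 2M - O_a$, and cyclically. Since the whole configuration is symmetric under cyclic relabelling of $A,B,C$, it suffices to establish the single identity
\[ O_a + R_A = H + I. \]
This reduction is the cheap part; the content lies in locating the two circumcenters.

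The key preliminary step is to identify the orthocenters of the two triangles whose circumcenters are $O_a$ and $R_A$. Because $H_A$ is the orthocenter of $BIC$, the four points $\{B,C,I,H_A\}$ form an orthocentric system, and hence $I$ is the orthocenter of $\tri_a = H_ABC$; on the other side, Proposition~\ref{prop:nagel} tells us that $N_a$ is the orthocenter of $\tri_A = AH_BH_C$. I would then apply the standard relation that a triangle with vertices $P,Q,R$ and orthocenter $H'$ has circumcenter $\tfrac12(P+Q+R-H')$, which gives at once
\[ O_a = \tfrac12\bigl(B + C + H_A - I\bigr), \qquad R_A = \tfrac12\bigl(A + H_B + H_C - N_a\bigr). \]
Adding these, the target identity reduces to evaluating the sum $H_A + H_B + H_C$ against $A+B+C$, $I$, and $N_a$.

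To finish I would invoke two well-known facts. Since $A_0$ is the circumcenter of $BIC$ and $O=0$, the orthocenter formula yields $H_A = B + C + I - 2A_0$, and cyclically, so that $H_A + H_B + H_C = 2(A+B+C) + 3I - 2(A_0+B_0+C_0)$. The mid-arc triangle $A_0B_0C_0$ is inscribed in $\Omega$ and has orthocenter $I$, so with $O=0$ one has $A_0 + B_0 + C_0 = I$; combined with the Nagel-line relation $N_a = 3G - 2I = (A+B+C) - 2I = H - 2I$, the sum collapses to $O_a + R_A = \tfrac12(3H - N_a) = H + I$, exactly as required. The identical computation, verbatim up to cyclic relabelling, handles $O_b + R_B$ and $O_c + R_C$.

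I expect the main obstacle to be purely organizational rather than conceptual: the one genuinely non-routine move is recognizing the two orthocentric systems $\{B,C,I,H_A\}$ and $\{A,H_B,H_C,N_a\}$ so as to read off the orthocenters $I$ and $N_a$, after which the circumcenter formula converts everything into linear vector bookkeeping of precisely the kind the Remark in Section~2 advertises. One should take care that the two auxiliary identities $A_0+B_0+C_0 = I$ and $N_a = H-2I$ are being used with the circumcenter genuinely at the origin, since both depend on that normalization.
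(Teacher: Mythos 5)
Your argument is correct, and every step checks out: the Euler relation $O'=\tfrac12(P+Q+R-H')$ applied with the orthocenters $I$ of $H_ABC$ (from the orthocentric system $\{B,C,I,H_A\}$) and $N_a$ of $AH_BH_C$ (Proposition~\ref{prop:nagel}), together with $H_A=B+C+I-2A_0$, $A_0+B_0+C_0=I$, and $N_a=H-2I$, does collapse $O_a+O_A$ to $H+I$, which is exactly the reflection statement vertex-by-vertex. However, this is a genuinely different route from the paper's. The paper argues in two stages: first it proves that $\tri_r$ is the point reflection of $\tri_f$ about \emph{some} point, by combining Lemma~\ref{lemma:centroid_ref} (the centroid quadruples $GG_AG_BG_C$ and $G'G_aG_bG_c$ are point-symmetric) with the homotheties of ratio $\tfrac32$ centered at the common orthocenters $N_a$ and $I$, which give $\overrightarrow{O_AO_B}=\tfrac32\overrightarrow{G_AG_B}=-\tfrac32\overrightarrow{G_aG_b}=-\overrightarrow{O_aO_b}$; second, it identifies the center of symmetry as the midpoint of $HI$ synthetically, by showing that $I$ lies on $\Omega_r$ (via Theorem~\ref{thm:concyclic} and the Nagel-line ratio), that the reflected image $H^*$ of $I$ satisfies $H^*O_a\perp AI$ with $H^*\in\Omega_f$, and then invoking $H\in\Omega_f$ and $HO_a\perp AI$ from Propositions~\ref{prop:ortho_on_fuhrmann} and~\ref{prop:fuhrmann_center} to conclude $H^*=H$. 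Your computation is shorter, self-contained, and pins down the vertex correspondence in one stroke, at the cost of being purely coordinate-based (precisely the trade-off the Remark in Section~2 anticipates); the paper's version is longer but exhibits more of the underlying structure, in particular that $I\in\Omega_r$, and ties the special case back to the concyclic centroids theorem, which is the main result of the article. The only point worth making explicit in your write-up is the one you already flag: the identities $H=A+B+C$, $A_0+B_0+C_0=I$, and $H_A=B+C+I-2A_0$ all require the circumcenter of $ABC$ to be the origin, so the normalization must be stated once and used consistently.
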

	
	The proof uses the following simple lemma:
	
	\begin{lemma}
		\label{lemma:centroid_ref}
		Let $G_A,G_B,G_C,G,G_a,G_b,G_c,G'$ be the centroids of $\triangle_A$, $\triangle_B$, $\triangle_C$, $ABC$, $\triangle_a$, $\triangle_b$, $\triangle_c$, $H_AH_BH_C$ respectively. Then $GG_AG_BG_C$ and $G'G_aG_bG_c$ are reflections about a point. 
	\end{lemma}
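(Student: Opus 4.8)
The plan is to work with position vectors, regarding each of $A,B,C,H_A,H_B,H_C$ as a vector from an arbitrary fixed origin; since the assertion is an affine one, the choice of origin is immaterial. The key observation is purely combinatorial: the eight triangles organize into four \emph{complementary} pairs of triples inside the six-point set $\{A,B,C,H_A,H_B,H_C\}$. Indeed, the triple $\{A,H_B,H_C\}$ defining $\tri_A$ is complementary to $\{H_A,B,C\}$ defining $\triangle_a$; the triple $\{B,H_C,H_A\}$ of $\tri_B$ is complementary to $\{H_B,C,A\}$ of $\triangle_b$; the triple $\{C,H_A,H_B\}$ of $\tri_C$ is complementary to $\{H_C,B,A\}$ of $\triangle_c$; and finally $\{A,B,C\}$ is complementary to $\{H_A,H_B,H_C\}$. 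Thus the pairing $\tri_A\leftrightarrow\triangle_a$, $\tri_B\leftrightarrow\triangle_b$, $\tri_C\leftrightarrow\triangle_c$, and $ABC\leftrightarrow H_AH_BH_C$ is exactly a triple paired with its set-theoretic complement.

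Next I would write each centroid as the average of its three vertices and add the two centroids of each complementary pair. Because the vertices of a triangle together with the vertices of its complement exhaust all six points, every such sum collapses to the same vector:
\[
G+G' = G_A+G_a = G_B+G_b = G_C+G_c = \tfrac13\bigl(A+B+C+H_A+H_B+H_C\bigr).
\]
In particular, the four segments $GG'$, $G_AG_a$, $G_BG_b$, and $G_CG_c$ all share the common midpoint $M=\tfrac16\bigl(A+B+C+H_A+H_B+H_C\bigr)=\tfrac12(G+G')$.

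Finally, since a segment has a unique midpoint, equality of midpoints is precisely the statement that $G'=2M-G$, $G_a=2M-G_A$, $G_b=2M-G_B$, and $G_c=2M-G_C$; that is, the point reflection $X\mapsto 2M-X$ carries $G,G_A,G_B,G_C$ onto $G',G_a,G_b,G_c$ respectively. Hence $GG_AG_BG_C$ and $G'G_aG_bG_c$ are reflections of one another about $M$, as claimed. There is no genuine obstacle in this argument: its entire content is the complementary-subset bookkeeping, after which the reflection center emerges automatically as the common midpoint. The only step requiring a moment's care is reading the correct vertex triples off the definitions of $\tri_A,\dots,\triangle_c$ so that the complementary pairing is correctly aligned with the reflection; note also that the computation never uses $P=I$, so the lemma in fact holds for the centroids attached to any six points.
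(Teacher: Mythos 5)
Your proof is correct and takes essentially the same route as the paper: the paper likewise introduces the centroid $T$ of the six points $A,B,C,H_A,H_B,H_C$ and observes that it is the common midpoint of $GG'$, $G_AG_a$, $G_BG_b$, $G_CG_c$, whence the two quadruples are point reflections of each other in $T$. Your write-up merely makes the complementary-triple bookkeeping explicit, which the paper leaves implicit.
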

	\begin{proof}
		Let $T$ denote the centroid of the six points $A,B,C,H_A,H_B,H_C$. Then $T$ is the midpoint of segments $GG', G_AG_a, G_BG_b, G_CG_c$. Therefore $GG_AG_BG_C$ and $G'G_aG_bG_c$ are reflections of each other in $T$.
	\end{proof}
	\\
	
	\begin{proof}
		(Of Theorem ~\ref{thm:anti_fuhrmann}) Note that $I$ is the common orthocenter of $\triangle_a,\triangle_b$ and $\triangle_c$ . It follows from taking a homothety at $I$ with ratio $\frac{3}{2}$ that $\overrightarrow{O_aO_b}=\frac{3}{2}\cdot \overrightarrow{G_aG_b}$. Similarly, since $\triangle_A, \triangle_B,\triangle_C$ share an orthocenter (Proposition ~\ref{prop:nagel}), it follows that $\overrightarrow{O_AO_B}=\frac{3}{2}\cdot \overrightarrow{G_AG_B}$. From Lemma ~\ref{lemma:centroid_ref}, $\overrightarrow{G_AG_B}=-\overrightarrow{G_aG_b}$. Thus, $\overrightarrow{O_AO_B}=-\overrightarrow{O_aO_b}$. Analogous relations imply $\triangle_r$ is a reflection of $\triangle_f$ in some point. \\\\
		We proceed to show that this point is actually the midpoint of $HI$. Let $H^*$ be a point such that $IH^*$, $O_AO_a, O_BO_b, O_CO_c$ share a midpoint. Then it follows that $IO_A\parallel H^*O_a$. From Theorem \ref{thm:concyclic}, $G$ lies on the circumcircle of $G_AG_BG_C$. It is a well-known property of the Nagel line that $\frac{\overline{IG}}{\overline{GN_a}}=\frac12$.
		Taking a homothety at $N_a$ with ratio $\frac{3}{2}$ shows that $I$ lies on $\Omega_r$. Hence, $H^*$ lies on $\Omega_f$. The claim in the proof of Theorem~\ref{thm:concyclic} implies that $GG_A \perp AI$. The homothety at $N_a$ with ratio $\frac{3}{2}$ also implies that $O_AI\perp AI$. Hence $H^*O_a \perp AI$. Therefore $H^*$ is a point on $\Omega_f$, such that $H^*O_a\perp AI$. In view of the proof of Proposition ~\ref{prop:fuhrmann_center}, $HO_a\perp AI$. From Proposition ~\ref{prop:ortho_on_fuhrmann}, $H$ lies on $\Omega_f$. Thus $H^*=H$. It immediately follows that $\triangle_r$ and $\triangle_f$ are reflections of each other in the midpoint of $HI$.
	\end{proof}
	
	\begin{corollary}
		\label{cor:anti-fuhrmann}
		The points $H$ and $I$  are the orthocenter and Euler reflection point of $\tri_r$ respectively.
	\end{corollary}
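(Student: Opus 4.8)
The plan is to derive Corollary \ref{cor:anti-fuhrmann} directly from Theorem \ref{thm:anti_fuhrmann} together with the two facts already established for the Fuhrmann triangle: that the orthocenter of $\tri_f$ is $I$ (Stevanovik's theorem) and that its Euler reflection point is $H$. The guiding observation is that the reflection about the midpoint of $HI$ is a point reflection, hence an orientation-preserving isometry of the plane, and that it interchanges $H$ and $I$. Once these are in hand, no further computation is needed.

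Concretely, I would let $\sigma$ denote the reflection about the midpoint $M$ of segment $HI$, so that Theorem \ref{thm:anti_fuhrmann} reads $\tri_r = \sigma(\tri_f)$. Since $M$ is the midpoint of $HI$, the map $\sigma$ sends $H$ to $I$ and $I$ to $H$. The first step is to record that $\sigma$, being a rotation by $\pi$ about $M$, is an isometry, and therefore carries each triangle center of $\tri_f$ to the corresponding center of its image $\tri_r$.

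Applying this equivariance to the orthocenter, the orthocenter of $\tri_r = \sigma(\tri_f)$ is the image under $\sigma$ of the orthocenter of $\tri_f$. By Stevanovik's theorem the latter equals $I$, and $\sigma(I) = H$; hence $H$ is the orthocenter of $\tri_r$. Applying the same principle to the Euler reflection point, the Euler reflection point of $\tri_r$ is the image under $\sigma$ of the Euler reflection point of $\tri_f$, which has been shown to be $H$; since $\sigma(H) = I$, the Euler reflection point of $\tri_r$ is $I$.

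The only step requiring care is the equivariance of the Euler reflection point under $\sigma$, which I would justify by recalling that the Euler reflection point is the common point of the reflections of the Euler line in the three sidelines (the anti-Steiner point of the Euler line). Since the Euler line, the sidelines, and reflections in lines are all preserved by the isometry $\sigma$, so is their point of concurrence. With this remark the corollary follows immediately from the transfer of the two known centers of $\tri_f$.
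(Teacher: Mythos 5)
Your proposal is correct and is exactly the derivation the paper intends: the corollary is stated without proof immediately after Theorem~\ref{thm:anti_fuhrmann}, to be read off from the fact that the point reflection in the midpoint of $HI$ is an isometry swapping $H$ and $I$, combined with Stevanovik's theorem and the proposition that the Euler reflection point of $\tri_f$ is $H$. Your extra remark justifying the equivariance of the Euler reflection point (as the anti-Steiner point of the Euler line) is a sensible addition and introduces no gap.
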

	
	\subsection{Further Properties}
	\begin{prop}
		The center of $\Omega_r$ is the reflection of $O$ about $I$. 
	\end{prop}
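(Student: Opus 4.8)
The plan is to combine the two reflection statements already proved in the paper, so that the proposition becomes a one-line vector computation along the Euler line. By Theorem~\ref{thm:anti_fuhrmann}, the circle $\Omega_r$ is the image of the Fuhrmann circle $\Omega_f$ under reflection in the midpoint $M$ of $HI$. Reflection in a point is an isometry sending circles to circles and centers to centers, so the center $O_r$ of $\Omega_r$ is simply the reflection of the Fuhrmann center $O_f$ in $M$. Thus the entire problem reduces to identifying the point $2M - O_f$.

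First I would record the two inputs as position-vector identities. Proposition~\ref{prop:fuhrmann_center} states that $O_f$ is the reflection of $I$ in the nine point center $N$, i.e. $O_f = 2N - I$, and the midpoint of $HI$ is $M = \tfrac12(H+I)$. Substituting into the reflection formula gives
\[
O_r = 2M - O_f = (H+I) - (2N - I) = H + 2I - 2N.
\]
The only remaining ingredient is the classical Euler line relation $N = \tfrac12(O+H)$ — the defining property of the nine point center as the midpoint of $O$ and $H$ — which yields $2N = O + H$. Plugging this in produces the cancellation
\[
O_r = H + 2I - (O+H) = 2I - O,
\]
and since $2I - O$ is exactly the reflection of $O$ in $I$, the proposition follows.

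I do not anticipate any genuine obstacle: the result is a formal consequence of Theorem~\ref{thm:anti_fuhrmann}, Proposition~\ref{prop:fuhrmann_center}, and the midpoint characterization of $N$. The only place demanding care is the bookkeeping of reflection centers — one must reflect $O_f$ in $M = \tfrac12(H+I)$, and not mistakenly in $I$ or $N$ — after which the terms $H$ and $2N$ cancel cleanly against $O+H$ and the identification is immediate.
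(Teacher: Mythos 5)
Your proposal is correct and is essentially the paper's own argument: both deduce from Theorem~\ref{thm:anti_fuhrmann} that $O_r$ is the reflection of $O_f$ in the midpoint of $HI$, then combine this with Proposition~\ref{prop:fuhrmann_center} and the midpoint property of $N$. The paper merely phrases the final vector cancellation as two parallelograms ($O_fIO_rH$ and $OIHO_f$) instead of position-vector algebra.
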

	\begin{proof}
		Let $O_r$ denote the center of $\Omega_r$ and let $M$ denote the midpoint of $HI$. It follows from Theorem~\ref{thm:anti_fuhrmann} that $M$ is also the midpoint of $O_rO_f$. Then, $O_fIO_rH$ is a parallelogram. Also, as a consequence of Proposition~\ref{prop:fuhrmann_center}, $OIHO_f$ is a parallelogram. It follows that $O_r$ is the reflection of $O$ about $I$.
	\end{proof}
	
	Recall that the circumcircles of $\triangle_A, \tri_B,$ and $\tri_C$ are concurrent at a point because of Corollary~\ref{cor:conc}. 
	
	\begin{prop}
		\label{prop:common_ortho_2}
		The common point $X$ of the circumcircles of $\triangle_A, \tri_B,$ and $\tri_C$ lies on the line $O_rH$.
	\end{prop}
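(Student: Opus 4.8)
The plan is to convert the asserted collinearity into a single vector identity and then reduce it to a classical collinearity. First I would pin down a clean description of the point $X$. By Corollary~\ref{cor:conc} the three circumcircles meet on $\HH_I$, and the argument in (and preceding) Proposition~\ref{prop:navneel} identifies their common point as the antipode, in $\HH_I$, of the common orthocenter of $\tri_A,\tri_B,\tri_C$; by Proposition~\ref{prop:nagel} that common orthocenter is $N_a$. Hence, writing $Z$ for the center of $\HH_I$ and treating every point as a position vector, we have $X = 2Z - N_a$. I would also record the standard identities $H = A+B+C-2O$, $N_a = A+B+C-2I$, and (for the nine-point center $N$ of $ABC$) $N = \tfrac12(A+B+C-O)$, so that $A+B+C-O = 2N$. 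Finally, the previous proposition supplies $O_r = 2I - O$.

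With these in hand the computation is immediate:
\begin{align*}
X - O_r &= 2Z - N_a - (2I - O) = 2Z - (A+B+C-O) = 2(Z - N),\\
H - O_r &= (A+B+C-2O) - (2I - O) = (A+B+C-O) - 2I = 2(N - I).
\end{align*}
Thus $X$, $O_r$, $H$ are collinear if and only if $Z-N$ is parallel to $N-I$, that is, if and only if the three points $Z$, $N$, and $I$ are collinear.

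It then remains only to prove that $Z$, $N$, and $I$ are collinear. Here I would invoke the classical identification of $\HH_I$: being the rectangular circumhyperbola through $I$ (equivalently, the isogonal conjugate of line $OI$), it is the Feuerbach hyperbola, whose center $Z$ is the Feuerbach point. By Feuerbach's theorem the incircle and the nine-point circle are internally tangent, and their point of tangency is exactly $Z$; since the tangency point of two tangent circles lies on the line joining their centers $I$ and $N$, we conclude $Z \in IN$. This delivers the required collinearity, and the proposition follows.

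The main obstacle is not the algebra but the two structural inputs in the first paragraph. One must correctly extract from Section~2 that the common point of the circumcircles is the $\HH_I$-antipode of the common orthocenter $N_a$ (yielding $X = 2Z - N_a$), and one must identify the hyperbola center $Z$ with the Feuerbach point. Once both facts are secured, the displayed vector identities collapse the statement to the tangency of the incircle and the nine-point circle, which is Feuerbach's theorem.
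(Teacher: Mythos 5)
Your proof is correct, and its structural inputs are the same as the paper's: the identification of $X$ as the $\HH_I$-antipode of the common orthocenter $N_a$, the fact that the center $Z$ of $\HH_I$ is the Feuerbach point $F_e$ lying on line $IN$, and the formula $O_r = 2I - O$. Where you differ is in execution. The paper argues synthetically: it gets $IN \parallel O_rH$ from the midpoint theorem in triangle $OO_rH$, then uses Proposition~\ref{prop:antipode} (that $O_f$ is the midpoint of $N_aH$) together with $F_e$ being the midpoint of $N_aX$ to conclude $HX \parallel O_fF_e = IN \parallel O_rH$, forcing $X$ onto $O_rH$. You instead encode everything as position vectors and reduce the collinearity of $X$, $O_r$, $H$ to the single identity that $Z - N$ and $N - I$ are parallel, i.e.\ to $F_e \in IN$, which is Feuerbach's theorem. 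Your route is slightly leaner: it bypasses $O_f$ and Proposition~\ref{prop:antipode} entirely, and it makes transparent exactly which classical fact the proposition rests on. The paper's version, in exchange, stays within the synthetic framework of the section and reuses its earlier propositions. Both are sound; the only caveat in yours is the degenerate case $Z = N$ or $N = I$, where the parallelism criterion is vacuous, but this occurs only for degenerate or equilateral configurations and is standard to ignore.
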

	\begin{proof}
		Since $N$ is the midpoint of $OH$ and $I$ is the midpoint of $OO_r$, we have $$IN \parallel O_rH.$$ The point $O_f$ lies on $IN$ from Proposition~\ref{prop:fuhrmann_center}, and so does $F_e$ because of Feuerbach's theorem. Using Lemma~\ref{lemma:basic2} and Proposition~\ref{prop:nagel}, we find that $F_e$ is the midpoint of $O_rX$. Moreover, $O_f$ is the midpoint of $N_aH$ (Proposition~\ref{prop:antipode}), so we have $$HX \parallel O_fF_e = IN.$$ It follows that $X$ lies on $O_rH$.  
	\end{proof}
	
	Finally, we go a little bit further than Proposition~\ref{cor:anti-fuhrmann}.
	
	\begin{prop}
		The reflections of the Euler line of $\tri_r$ about its sidelines are precisely the lines $H_AI, H_BI,$ and $H_CI$.
	\end{prop}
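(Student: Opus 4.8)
The plan is to lean on Corollary~\ref{cor:anti-fuhrmann}, which tells us that $I$ is the Euler reflection point of $\tri_r$. By the defining property of the Euler reflection point --- the concurrence of the reflections of the Euler line in the three sidelines --- each of the three reflected lines already passes through $I$. Thus the content of the proposition reduces to pinning down a \emph{second} point on each reflection: I will show that the reflection of the Euler line of $\tri_r$ in the side $O_BO_C$ passes through $H_A$, which forces that reflection to be the line $H_AI$, and the remaining two cases follow symmetrically.

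The key step is to recognize the side $O_BO_C$ as a perpendicular bisector. The circumcircle of $\tri_B=BH_CH_A$ and the circumcircle of $\tri_C=CH_AH_B$ share the vertex $H_A$, and by Corollary~\ref{cor:conc} they also pass through the common point $X$ of all three circumcircles. Hence $H_A$ and $X$ are exactly the two intersection points of these circles, so the line $O_BO_C$ joining their centers is the perpendicular bisector of the chord $H_AX$. In particular, reflection in $O_BO_C$ interchanges $H_A$ and $X$.

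Now I bring in Proposition~\ref{prop:common_ortho_2}, which places $X$ on the line $O_rH$. Since $O_r$ and $H$ are the circumcenter and orthocenter of $\tri_r$ (the latter again by Corollary~\ref{cor:anti-fuhrmann}), the line $O_rH$ is precisely the Euler line of $\tri_r$, and therefore $X$ lies on that Euler line. Reflecting the Euler line in $O_BO_C$ thus carries the point $X$ lying on it to $H_A$, so the reflected line contains $H_A$; combined with the first paragraph it contains both $H_A$ and $I$, and hence equals the line $H_AI$. Running the identical argument with $O_CO_A$ as the perpendicular bisector of $H_BX$ and $O_AO_B$ as that of $H_CX$ identifies the other two reflections as $H_BI$ and $H_CI$, completing the proof.

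The one genuine subtlety I expect to be the crux is the middle step: correctly matching the shared vertex $H_A$ of the circumcircles of $\tri_B$ and $\tri_C$ with the triple concurrency point $X$, so that $O_BO_C$ emerges cleanly as the perpendicular bisector of $H_AX$. Once that common-chord structure is in place, everything else is an immediate consequence of the cited results. I would also record the harmless generic non-degeneracy hypotheses that $H_A\neq X$ (so the common chord, and thus the reflection, is well defined) and $H_A\neq I$ (so the line $H_AI$ is determined).
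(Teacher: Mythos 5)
Your proposal is correct and follows essentially the same route as the paper: both use Corollary~\ref{cor:anti-fuhrmann} to get $I$ on each reflected line, Proposition~\ref{prop:common_ortho_2} to place $X$ on the Euler line of $\tri_r$, and the observation that $O_BO_C$ is the perpendicular bisector of the common chord $H_AX$ of the circumcircles of $\tri_B$ and $\tri_C$ to land the reflection on $H_A$. (In fact your identification of those two circles as the circumcircles of $\tri_B$ and $\tri_C$ corrects a small slip in the paper's wording, which names $\tri_A$ and $\tri_B$.)
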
	
	\begin{proof}
		Using Proposition~\ref{prop:common_ortho_2} and Corollary~\ref{cor:anti-fuhrmann}, we find that $X$ lies on the Euler line of $\tri_r$. Since $I$ is the Euler reflection point of $\tri_r$ (again Corollary~\ref{cor:anti-fuhrmann}), we find that the reflection $XH$ about $O_BO_C$ passes through $I$. Since $X$ and $H_A$ are common to the circumcircles of $\tri_A$ and $\tri_B$, they are also reflections about $O_BO_C$. Consequently, the reflection of $XH$ about $O_BO_C$ is $IH_A$, after which the proof is immediate.  
	\end{proof}
	
	\bibliographystyle{siam}
	\bibliography{references}

	\bigskip
	
	\bigskip
	
	\bigskip
	
	DEPARTMENT OF MATHEMATICS
	
	INDIAN INSTITUTE OF SCIENCE
	
	BANGALORE, INDIA
	
	\textit{E-mail address}: \texttt{sudharshankv02@gmail.com}
	
	\bigskip
	
	\bigskip
	
	DEPARTMENT OF MATHEMATICS
	
	INDIAN INSTITUTE OF TECHNOLOGY
	
	BOMBAY, INDIA
	
	\textit{E-mail address}: \texttt{shantanu.h.nene@gmail.com}\bigskip
	
	\newpage
	
\end{document}